\documentclass[letterpaper, 10 pt, conference]{ieeeconf/ieeeconf}  

\IEEEoverridecommandlockouts                              

\title{\LARGE \bf
Robust Planning with Quadratically Constrained State-Dependent Uncertainties
}

\author{Fabio Spada$^{1}$ and Beh\c cet A\c c\i kme\c se$^{2}$
\thanks{$^{1}$Fabio Spada is with the Department of Mechanical Engineering, University of California at Berkeley, Berkeley, CA 94720 USA (e-mail:{\tt\small fabio\_s@berkeley.edu}) \newline
$^{2}$Beh\c cet A\c c\i kme\c se is with the Department of Electrical Engineering and Computer Sciences and with the Department of Mechanical Engineering, University of California at Berkeley, Berkeley, CA 94720 USA (e-mail:{\tt\small behcet@berkeley.edu})}%
}

\usepackage{amsmath, amssymb, amsfonts} 
\usepackage{mathtools}
\usepackage{graphicx}
\usepackage{float}
\usepackage{tikz}
\usepackage{wrapfig}
\usepackage{cuted}
\usepackage{algorithm}
\usepackage{algpseudocode}
\usepackage[caption=false,font=footnotesize]{subfig}
\usepackage{booktabs}
\usepackage{multirow}
\usepackage{textcomp}
\usepackage{xcolor}
\usepackage[most]{tcolorbox}
\usepackage{adjustbox}
\usepackage{hyperref}
\usepackage{bm}
\usepackage{todonotes}

\newcommand{\calP}{\mathcal{P}}

\newcommand{\Se}{\bm{S}}
\newcommand{\lidx}[2]{{\vphantom{#1}}^{#2}\mkern-2mu#1}

\makeatletter
\newsavebox{\fixidxbox}
\DeclareRobustCommand{\fixidx}[1]{\mathpalette\fixidx@{#1}}
\newcommand{\fixidx@}[2]{%
  \sbox{\fixidxbox}{$\m@th#1#2$}%
  \ooalign{%
    \usebox{\fixidxbox}\cr
    \hidewidth\raisebox{-.22ex}{\rule{\wd\fixidxbox}{.35pt}}\hidewidth\cr
  }%
}
\DeclareRobustCommand{\luoG}{\mathpalette\luoG@{}}
\newcommand{\luoG@}[2]{%
  \ooalign{%
    $\m@th#1\underline{\overline{G}}$\cr
    \kern-.45pt\rule[-.36ex]{.45pt}{2.24ex}\cr
  }%
}
\DeclareRobustCommand{\luoN}{\mathpalette\luoN@{}}
\newcommand{\luoN@}[2]{%
  \ooalign{%
    $\m@th#1\underline{\overline{N}}$\cr
    \kern-.45pt\rule[-.36ex]{.45pt}{2.24ex}\cr
  }%
}
\makeatother

\newcounter{example}
{%
  \refstepcounter{example}%
  \par\medskip\noindent
  \begingroup
  \textbf{Example \theexample}%
    \if\relax\detokenize{#1}\relax.\else\ (#1).\fi
  \ %
  \itshape
}%
{%
  \par
  \endgroup
  \medskip
}

\newcounter{problem}
\newcommand{\problemframecolor}{black}
\makeatletter
\newtagform{blank}[\@gobble]{}{}
\makeatother
\newenvironment{problem}[1][]%
{%
  \refstepcounter{problem}%
  \begingroup
  \usetagform{blank}%
  \begin{tcolorbox}[
    breakable,
    enhanced,
    colback=white,
    colframe=\problemframecolor,
    boxrule=0.5pt,
    arc=0pt,
    outer arc=0pt,
    left=4pt,
    right=4pt,
    top=4pt,
    bottom=4pt,
    before skip=\medskipamount,
    after skip=\medskipamount
  ]
  \noindent
  \textbf{Problem~\theproblem}%
    \if\relax\detokenize{#1}\relax\else\ #1\fi%
  \itshape\ %
}%
{%
  \end{tcolorbox}%
  \endgroup
}

\newtheorem{theorem}{Theorem}[section]  
\newtheorem{lemma}[theorem]{Lemma}       

\newtheorem{assumption}{Assumption}[section]

\newtheorem{remark}[theorem]{Remark}

\begin{document}

\def\myConference{American Control Conference 2027}
\def\myCopyrightYear{2026}

\begin{figure*}[t]
  \thispagestyle{empty}
  \begin{center}
    \begin{minipage}{6in}
      \centering
      This paper has been submitted for presentation at the
      \emph{\myConference}.
      \vspace{1em}

      This is the author's version of an article that has, or will be,
      published in this conference. Changes were, or will be, made to this
      version by the publisher prior to publication.
      \vspace{17cm}

      \copyright\ \myCopyrightYear\ IEEE. Personal use of this material is
      permitted. Permission from IEEE must be obtained for all other uses, in
      any current or future media, including reprinting/republishing this
      material for advertising or promotional purposes, creating new
      collective works, for resale or redistribution to servers or lists, or
      reuse of any copyrighted component of this work in other works.
    \end{minipage}
  \end{center}
\end{figure*}
\newpage
\clearpage
\pagenumbering{arabic}

\maketitle

\begin{abstract}

We address robust trajectory-planning problems with state constraints and with control/state-dependent uncertain inputs and measurements. Uncertain inputs and measurements satisfy quadratic inequalities with rank-2 indefinite multipliers; measurements depend on the uncertain-state realization and on the uncertain inputs. By means of Lagrangian duality, we derive finite nonsmooth constraints that guarantee robust satisfaction of the state constraints, yielding a formulation that can be addressed without bilevel solvers. We develop a two-step convex heuristic for the resulting nonconvex nonsmooth formulation, using a convex relaxation followed by a convex feasibility-recovery step. We test the two-step approach on a rendezvous scenario with state-dependent navigation uncertainty, and we reduce by a factor 13.8 the computation time with respect to the commercial solver Gurobi, while maintaining robust satisfaction of constraints.

\end{abstract}

\section{Introduction}
Robust control enables the design of a single control solution guaranteeing stability and performance for all uncertainties belonging to a prescribed set. In this paper, we consider systems in which this set depends on the realized state: an uncertain input changes the subsequent state, which in turn changes the admissible uncertainty at later stages. Quadratic inequalities (QIs) provide a natural framework for describing such dependencies and connecting them to convex optimization methods for analysis and synthesis. Each QI is represented by a symmetric multiplier matrix, which characterizes the quadratic form of the QI. QIs are widely used in a broad range of applications. For instance, incremental nonlinear dynamics around an equilibrium trajectory can be modeled with QIs \cite{Acikmese2002-cr}: Incremental Quadratic Constraints ($\delta$QCs) characterize how increments in state-dependent variables map to increments in the nonlinearities. This enables the synthesis of observer-based controllers \cite{Xu2021-bv} and state-feedback controllers \cite{Reynolds2021-vl} by solving a convex feasibility problem with linear matrix inequalities. The procedure can be applied as well to continuous time \cite{Kim2025-rb}. Stability of linear systems in feedback with output-dependent uncertainties is further ensured by satisfaction of certain QIs when uncertainties and outputs satisfy Integral Quadratic Constraints (IQCs) \cite{Megretski1997-gi, Seiler2015-pj}. IQCs have a wide set of applications; some of them include stability verification with state constraints \cite{Fetzer2018-gq}, robust control of uncertain systems \cite{Biertumpfel2025-fj, Schwenkel2026-oq}, contraction analysis \cite{Shima2026-jf}, model predictive control \cite{Schwenkel2025-pa} and reachability analysis \cite{Yin2021-zt}. QIs further benefit neural network analysis and synthesis: they can be used for rapid safety verification of appropriate neural networks \cite{Fazlyab2022-xj}, thus allowing to treat neural network controllers within classical Lyapunov stability analysis \cite{Yin2022-og} frameworks. On the other hand, QIs can be used as constraints to regularize neural network controllers \cite{Junnarkar2025-tq}, or can also guide safe-by-design parameterizations of neural network layers \cite{Pauli2026-jm}.\\
Since multipliers are usually indefinite matrices, the uncertainty domain they describe is nonconvex; full rank, rank-2 indefinite multipliers describe nonconvex uncertainty sets that can be represented as finite unions of convex sets, thus connecting directly with \textit{disjunctive programming} \cite{Balas1998-ti}. There is an extensive literature, as confirmed by survey \cite{Trespalacios2014-uv}; studies mainly divide between those that seek exact relaxations \cite{Gusev2025-yl}, and those that seek tighter relaxations for the integer solver \cite{Peng2024-jy}. In the control field, lossless convexification exploits a provably exact relaxation to handle some classes of integer constraints; proposed for deterministic continuous-time systems \cite{Acikmese2007-cp, Echigo2023-lp} it has been extended recently to discrete-time systems \cite{Luo2025-dt, Luo2026-nf} and to uncertain systems with additive state-dependent uncertainties \cite{Sheridan2025-rf}.\\
Outside of the scope of QIs, additional approaches for robust planning involve System Level Synthesis \cite{Chen2024-ox}, which exploits a disturbance-feedback control parameterization to handle both parametric and additive uncertainties, tube model predictive control \cite{Rakovic2016-yt}, which parameterizes the set to which the state belongs at each time step, and constraint-tightening approaches \cite{Bujarbaruah2022-ux} that compute back-offs on the constraints' right-hand sides and synthesize only the nominal trajectory online. The previous approaches are not mutually exclusive and have been combined to handle nonlinear systems, but still they require an overapproximation of the reachable set, thus introducing conservatism \cite{Kohler2021-rc}. Additional works focus on state-dependent uncertainties; modelling such \textit{decision-dependent} uncertainties can reduce conservatism typical of robust optimization \cite{Lappas2018-xy}, at the cost of a generally $\mathcal{NP}$-hard problem \cite{Nohadani2018-zf}. Either assuming that uncertainties depend on nominal states \cite{Malyuta2019-yo,Sheridan2023-cr}, or applying the planning in recursive fashion \cite{Soloperto2018-gg} allow to bypass problem hardness, respectively by simplifying the uncertainty model or by using the uncertainty model only for one time-step. How the reachable set of the dynamical system is approximated, which constitutes a problem on its own \cite{Lew2025-em}, discriminates between conservatism and optimality for all the methods mentioned above.\\
In this paper, we handle scalar uncertainties characterized by rank-2 multipliers, involving state-dependent measurements computed at the uncertain states, with a focus on finite representability of the resulting problem. First, we formulate the robust semi-infinite programming (SIP) problem without assuming that bounds on uncertainties are evaluated at the reference trajectory: our model retains the dependence of uncertainty bounds on realized states. We then adopt Lagrangian duality to derive finite nonsmooth constraints that guarantee robust satisfaction of the state constraints, under a verifiable structural assumption on uncertainty feedthrough matrices. This yields a formulation that can be addressed without bilevel solvers. The proposed dualization step resembles the approach that authors of \cite{Chen2023-ge} apply to polyhedral uncertainty domains. Our method handles instead unions of convex domains, thus broadening the class of state-dependent uncertainties that can be taken into account. We further propose a two-step algorithm based on convex optimization to solve the nonsmooth problem and we finally compare its performance against the state-of-the-art solver Gurobi on a rendezvous scenario with navigation uncertainty. On the tested instances, our algorithm reduces computation time by a factor of 13.8 relative to the corresponding mixed-integer formulation solved using Gurobi, while obtaining matching objective values and guaranteeing robust satisfaction of the state constraints. We will extend our work to multipliers of arbitrary rank, but in the present work we focus on rank-two multipliers only.\\
In Sec. \ref{sec:statement} we state the problem of interest. Sec. \ref{sec:convex_reduction} contains the major theorems: we derive finite nonsmooth constraints that guarantee robust satisfaction of the state constraints via Lagrangian duality (Sec. \ref{sec:fromSIP_tofinite}) and propose a two-step convex approach to address the resulting formulation (Sec. \ref{sec:convexification}). We describe the resulting algorithm, propose a numerical example and draw conclusions, respectively, in Sec. \ref{sec:algorithm}, \ref{sec:numerical_example} and \ref{sec:conclusions}.

\subsection{Notation}
The index set gathering indices from $1$ to $N$ is indicated as $\mathcal{I}^{N}$; the index set overloaded with the zero index is denoted as $\mathcal{I}^{N\cup 0}$. Given two sets $\mathcal{A}$,$\mathcal{B}$, we denote by $\mathcal{A}\times\mathcal{B}$ their Cartesian product. We indicate with $\bm{1}_n$ the vector of $1$'s of dimension $n$ and with $\mathbb{I}_n$ the identity matrix of dimension $n\times n$. We use the notation $\phi:\mathbb{R}\rightrightarrows\mathbb{R}$ to denote a set-valued mapping from $\mathbb{R}$ to $\mathbb{R}$.

\section{Problem statement}
\label{sec:statement}
Consider a controlled uncertain discrete-time linear system over an $N+1$-step horizon.
System dynamics are perturbed by a fixed number $n_c\geq1$ of scalar uncertain signals at every time step $k\in\mathcal{I}^{N\cup0}$. We refer to each signal as a \textit{channel}. For $j\in\mathcal{I}^{n_c}$, each channel is characterized by a scalar system measurement $q_k^{j}\in\mathbb{R}$ and scalar system input $p_k^{j}\in\mathbb{R}$. Define the channel vectors and the product mapping
\begin{equation*}
\begin{aligned}
\bm{p}_k&\coloneqq[p_k^{1},\ldots,p_k^{n_c}]^\top,\\
\bm{q}_k&\coloneqq[q_k^{1},\ldots,q_k^{n_c}]^\top,\\
\bm{\phi}_k(\bm{q}_k)&\coloneqq
\phi_k^{1}(q_k^{1})\times\cdots\times\phi_k^{n_c}(q_k^{n_c}),
\end{aligned}
\end{equation*}
where the set-valued map $\phi_k^{j}:\mathbb{R}\rightrightarrows\mathbb{R}$ describes one scalar channel.
The set-valued map $\phi_k^{j}$ is associated with the symmetric and indefinite \textit{multiplier matrix} $M_k^{j}\in\mathbb{R}^{2\times2}$. Therefore, by definition, $p_k^j \in \phi_k^{j}(q_k^j)$ if and only if $z_k^{j}\coloneqq[q_k^{j},p_k^{j}]^\top$ satisfies the following quadratic inequality.
\begin{equation}
(z_k^{j})^\top M_k^{j}z_k^{j}\geq0,\quad k\in\mathcal{I}^{N\cup0},\quad j\in\mathcal{I}^{n_c}.
\label{eq:quadratic_inequality_step}
\end{equation}
We define $\mathcal{M}_k^j$ as the set of $z_k^j$ that satisfy \eqref{eq:quadratic_inequality_step}.

The discrete-time system is described by the following dynamics
\begin{equation}
\begin{array}{l}
\left.\begin{array}{l}
x_{k+1} = A_k x_k + B_{u,k} u_k + B_{p,k}\bm{p}_k\\[1ex]
\bm{q}_k = C_k x_k + D_{u,k} u_k + D_{p,k}\bm{p}_k \\[1ex]
\bm{p}_k \in \bm{\phi}_k(\bm{q}_k)\end{array}
\right| \quad k \in \mathcal{I}^{N\cup0}
\end{array}
\label{eq:dyn_uncert}
\end{equation}
controlled by $N+1$ impulses $u_k$ along the control horizon $[t_0,t_f]$. Here $x_k\in\mathbb{R}^{n_x}$ defines the states, $u_k\in\mathbb{R}^{n_u}$ defines the impulsive control. Furthermore, $A_k\in\mathbb{R}^{n_x\times n_x}$ is the \textit{state matrix}, $B_{u,k}\in\mathbb{R}^{n_x\times n_u}$ is the \textit{input matrix}, $B_{p,k}\in\mathbb{R}^{n_x\times n_c}$ is the \textit{uncertainty matrix}, $C_k\in\mathbb{R}^{n_c\times n_x}$ is the \textit{measurement matrix}, $D_{u,k}\in\mathbb{R}^{n_c\times n_u}$ is the \textit{input feedthrough matrix} and $D_{p,k}\in\mathbb{R}^{n_c\times n_c}$ is the \textit{uncertainty feedthrough matrix}.

States $x_k$ and measurements $\bm{q}_k$ are uncertain for all $k$.

Define the \textit{stacked states, controls, uncertainties and measurements} as
\begin{equation*}
\adjustbox{max width=\linewidth}{$\begin{aligned}
\bm{x} &\coloneqq
\begin{bmatrix}
x_1\\
\vdots \\
x_N \\
x_{N+1}
\end{bmatrix},
&
\bm{u} &\coloneqq
\begin{bmatrix}
u_0\\
\vdots \\
u_N
\end{bmatrix},
& 
\bm{p} &\coloneqq
\begin{bmatrix}
\bm{p}_0\\
\vdots \\
\bm{p}_N
\end{bmatrix},
&
\bm{q} &\coloneqq
\begin{bmatrix}
\bm{q}_0\\
\vdots \\
\bm{q}_N
\end{bmatrix}.
\end{aligned}$}
\end{equation*}

Let $N_c\coloneqq n_c(N+1)$ denote the total number of scalar channels over the horizon, so $\bm{p},\bm{q}\in\mathbb{R}^{N_c}$. Define the following horizon mapping 
$$
\bm{\phi}(\bm{q})\coloneqq
\bm{\phi}_0(\bm{q}_0)\times\cdots\times\bm{\phi}_N(\bm{q}_N).
$$

The uncertain stacked states $\bm{x}$ and uncertain stacked measurements $\bm{q}$ are then given by
\begin{equation}
\begin{array}{l}
\bm{x} = \bm{A} x_0 + \bm{B}_{u} \bm{u} + \bm{B}_{p}\bm{p}\\[1ex]
\bm{q} = \bm{C} x_0 + \bm{D}_{u} \bm{u} + \bm{D}_{p}\bm{p} \\[1ex]
\bm{p} \in \bm{\phi}(\bm{q})
\end{array}
\label{eq:dyn_uncert_stacked}
\end{equation}

The mathematical definition of the \textit{stacked matrices} $\bm{A}$, $\bm{B}_u, \bm{B}_{p},$ $\bm{C}$, $\bm{D}_u$ and $\bm{D}_p$ is reported in Appendix \ref{sec:appendix} for completeness. We only highlight a feature of $\bm{D}_u$ and $\bm{D}_p$ in the next remark. 
\begin{remark}
    $\bm{D}_u, \bm{D}_p$ depend on both feedthrough matrices $D_{u,k}, D_{p,k}$ and measurement matrices $C_{k}$, hence $D_{u,k} = D_{p,k} = 0$ does not imply $\bm{D}_u = \bm{D}_p = 0$.
\end{remark}

State $\bm{x}$ is uncertain and there is an infinite number of possible state realizations. Our aim is to choose controls $\bm{u}$ and bounds $\beta\in\mathbb{R}^{n_\beta}$ such that, given $\alpha\in\mathbb{R}^{n_\beta\times n_x (N+1)}$, the linear state constraints
\begin{equation}
\alpha\bm{x} \leq \beta 
\label{eq:SIconstraint}
\end{equation}
are robustly satisfied, that is to say they are satisfied for all $\bm{x} \;\,\text{satisfying}\;\,\eqref{eq:dyn_uncert_stacked}$. 

Controls, constraint bounds, and the initial state are further restricted to prescribed polytopes, possibly singletons. Such sets capture actuation, design, and dispersion limits:
\begin{equation}
\bm{u}\in\mathcal{U}\subseteq\mathbb{R}^{n_u(N+1)},\quad \beta\in\mathcal{B}\subseteq\mathbb{R}^{n_\beta},\quad x_0\in\mathcal{X}_0\subseteq\mathbb{R}^{n_x}.
\label{eq:admissible_sets}
\end{equation}

Under robust state constraint satisfaction, we seek to minimize the convex objective $\mathcal{L}:\mathbb{R}^{n_u(N+1)}\times\mathbb{R}^{n_\beta}\rightarrow \mathbb{R}$, which depends on the nominal controls and on the bounds on state constraints $\beta$.

The decision variables shall satisfy an infinite number of state constraints, one for each realization of the uncertain state $\bm{x}$. The complete problem belongs then to the class of \textit{Semi-Infinite Programming} (SIP) problems. Using the \textit{robust counterpart}, the SIP problem can be written as a \textit{min-max} problem \cite{Ben-Tal2009-nv}. The \textit{decision variables} $x_0, \bm{u}, \beta$ are kept as outer decision variables; the variables $\bm{x}, \bm{p}$, characterizing the inner maximization, are instead referred to as \textit{inner decision variables}. The \textit{robust counterpart} is then outlined as follows, where the inner max-type constraint constitutes the \textit{robust constraint satisfaction} (RCS) condition. The maximum is applied element-wise to its vector argument.
\begin{problem}
\label{prob:robust_counterpart}
\label{prob:rcs}
\begin{equation}
\begin{array}{cl}
    \underset{x_0,\!\bm{u},\!\beta}{\min.} & \mathcal{L}(\bm{u}, \beta) \\[1ex]
    \mathrm{s.t.} & \bm{u}\in\mathcal{U},\ \beta\in\mathcal{B},\ x_0\in\mathcal{X}_0 \\[1ex]
                  & \setlength{\fboxsep}{1pt}\fbox{\renewcommand{\arraystretch}{1.1}\setlength{\arraycolsep}{1.5pt}
    $\begin{array}{rl}
    \multicolumn{2}{l}{\text{\normalfont\bfseries RCS}} \\
    \underset{\substack{\bm{x}\in\mathbb{R}^{n_x(N+1)}\\[-0.2ex] \bm{p}\in\mathcal{P}(x_0,\bm{u})}}{\max} & \alpha\bm{x} \leq \beta \\[1ex]
     \bm{x} = & \bm{A} x_0 + \bm{B}_{u} \bm{u} + \bm{B}_{p}\bm{p} \\[1ex]
    \mathcal{P}(x_0,\bm{u}) \coloneqq & \left\{\bm{p}\left|\begin{array}{l}
    \bm{q} = \bm{C} x_0 + \bm{D}_{u} \bm{u} + \bm{D}_{p}\bm{p} \\[1ex]
    \bm{p} \in \bm{\phi}(\bm{q})
    \end{array}\right.\right\}
    \end{array}$}
\end{array}
\label{eq:robust_counterpart_nested_rcs}
\end{equation}
\end{problem}

\section{Convex approach via Outer Factorization}
\label{sec:convex_reduction}
For notational simplicity, we begin by considering a single scalar uncertainty channel, with $p,q\in\mathbb{R}$ and multiplier matrix $M\in\mathbb{R}^{2\times2}$.
Let $V$ and $U$ gather the eigenvectors of $M$ scaled by the square roots of the absolute values of the respective positive/negative eigenvalues of $M$. Define the vectors $g, h$ as in the next equations
\begin{equation}
\begin{array}{l}
g = V + U\\[0.5ex]
h = V - U.
\end{array}
\label{eq:vectors_gh}
\end{equation}
The multiplier $M$ can be outer-factorized, i.e. the vectors $g,h$ are \textit{outer factors} of $M$, according to the following equation
$$
M = \dfrac{1}{2}(g h^T + h g^T).
$$
Rewriting 
$$
g = \left[\begin{array}{c}
    g_q \\
    g_p
\end{array}\right],\; h = \left[\begin{array}{c}
    h_q \\
    h_p
\end{array}\right],
$$
where $g_p$ and $h_p$ are the components of $g$ and $h$ corresponding to inputs $p$, and $g_q$ and $h_q$ are the components corresponding to measurements, we can define column vectors $E,F\in\mathbb{R}^{2\times1}$ as
$$
E \coloneqq \left[\begin{array}{c} g_p \\ h_p \end{array}\right], \qquad
F \coloneqq \left[\begin{array}{c} g_q \\ h_q \end{array}\right].
$$
The membership $z\in\mathcal{M}$ can be expressed in terms of matrices $E, F$. 
\begin{lemma}
    $z \in \mathcal{M}$ iff $E p + F q \geq 0$ or $E p + F q \leq 0$.
    \label{lem:mult_to_EF}
\end{lemma}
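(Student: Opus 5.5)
We read the inequalities componentwise: $Ep+Fq\geq 0$ means both entries are nonnegative, and $Ep+Fq\leq 0$ means both are nonpositive. The plan is to rewrite the quadratic form $z^\top M z$ through the outer factorization $M=\tfrac12(gh^\top+hg^\top)$. This reduces membership in $\mathcal{M}$ to a sign condition on a product of two scalars.

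First we verify the factorization. Write $M=VV^\top-UU^\top$, where $V=\sqrt{\lambda_+}\,v_+$ and $U=\sqrt{|\lambda_-|}\,v_-$ are the scaled eigenvectors for the positive and negative eigenvalues. Since $M$ is $2\times 2$, symmetric and indefinite, it has exactly one eigenvalue of each sign. Expanding with $g=V+U$ and $h=V-U$ gives
\begin{equation*}
gh^\top+hg^\top = 2VV^\top-2UU^\top = 2M .
\end{equation*}
The cross terms $VU^\top$ and $UV^\top$ cancel in the sum.

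Next, for any $z=[q,p]^\top$ we have
\begin{equation*}
z^\top M z=\tfrac12\big((g^\top z)(h^\top z)+(h^\top z)(g^\top z)\big)=(g^\top z)(h^\top z).
\end{equation*}
By the partition of $g$ and $h$ into $q$- and $p$-components, $g^\top z=g_q q+g_p p$ and $h^\top z=h_q q+h_p p$. These are exactly the two entries of $Ep+Fq$, by the definitions of $E$ and $F$.

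Hence $z\in\mathcal{M}$, i.e.\ $z^\top Mz\geq 0$, holds iff the product of the two entries of $Ep+Fq$ is nonnegative. This happens iff the two entries are both nonnegative or both nonpositive, since a product of two reals is nonnegative exactly when they do not have strictly opposite signs. That is precisely the statement $Ep+Fq\geq0$ or $Ep+Fq\leq 0$, which proves both directions at once.

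No step is a genuine obstacle. The one point needing care is the meaning of $V$ and $U$ as single vectors, which is justified because indefiniteness of a $2\times2$ symmetric matrix forces rank two with one eigenvalue of each sign. The other point is that the componentwise reading of the vector inequalities is what makes the equivalence exact.
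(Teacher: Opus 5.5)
Your proof is correct and follows the same route as the paper. You rewrite $z^\top M z$ as $(g^\top z)(h^\top z)$ via the outer factorization, identify the two factors with the entries of $Ep+Fq$, and apply the sign rule for a product of two reals; you also verify $M=\tfrac12(gh^\top+hg^\top)$ from $M=VV^\top-UU^\top$ and make the componentwise reading explicit, both of which the paper leaves implicit.
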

\begin{proof} The following set of conditions holds.
    $$
    \begin{array}{cl}
    z^{\top}M z\geq0&\Leftrightarrow z^{\top}(gh^{\top}+hg^{\top})z\geq0\\[1ex]
    &\Leftrightarrow(g^\top z)^{\top}(h^\top z)\geq0\\[1ex]
    &\Leftrightarrow( g^\top z\geq0 \wedge h^\top z\geq0 )\quad \\
    & \qquad \qquad \vee \quad ( g^\top z\leq0 \wedge h^\top z\leq0 ) \\[1ex]
    &\Leftrightarrow E p + F q \geq 0 \quad \vee \quad  E p + F q\leq 0. 
     \end{array}
    $$
\end{proof}
\subsection{Reduction of semi-infinite problem to mixed-integer problem}
\label{sec:fromSIP_tofinite}
The approach described in the previous paragraph extends naturally to the multi-channel case. For each time $k\in\mathcal{I}^{N\cup0}$ and channel $j\in\mathcal{I}^{n_c}$, let $E_k^{j},F_k^{j}$ be the coefficient vectors obtained as above for $M_k^{j}$. For ease of notation, unify $k$ and $j$ into a single index $c$ defined as $c(k,j) \coloneqq j + k\,n_c$, running from $1$ to the total number of scalar channels $N_c$. Furthermore, let the shorthand $\lidx{(\cdot)}{c}$ denote quantity $(\cdot)_{k(c)}^{j(c)}$, e.g. $\lidx{E}{c}\coloneqq E_{k(c)}^{j(c)}$.

For ease of analysis, let us restrict Problem \ref{prob:rcs} to a single scalar constraint imposed at final time. Each domain associated with each $\lidx{\phi}{c}$ is nonconvex, and can be represented exactly by a finite disjunction. Applying the outer factorization (OF) described in Lemma \ref{lem:mult_to_EF} to each $\lidx{\phi}{c}$, we obtain the following RCS condition.
{\renewcommand{\theproblem}{2-RCS-OF}
\begin{problem}
\label{prob:rcs_multistep_decomposed}
\begin{equation}
\adjustbox{max width=\linewidth}{$\begin{array}{cl}
    \underset{\substack{x_{N+1}\in\mathbb{R}^{n_x}\\[-0.2ex] \bm{p}\in\mathcal{P}(x_0,\bm{u})}}{\max} & \alpha x_{N+1} \leq \beta \\[1ex]
     \mathrm{s.t.}& x_{N+1} = \bm{A}_{N+1} x_0 + \bm{B}_{u,N+1} \bm{u} + \bm{B}_{p,N+1}\bm{p}\\[1ex]
    \mathcal{P}(x_0,\bm{u}) \coloneqq & \left\{\bm{p}\left|\begin{array}{l}
\bm{q} = \bm{C} x_0 + \bm{D}_{u} \bm{u} + \bm{D}_{p}\bm{p} \\[1ex]
\begin{array}{l}
\displaystyle \lidx{E}{c}\, \lidx{p}{c} + \lidx{F}{c}\, \lidx{q}{c} \geq 0 \; \vee \\[1ex]
\displaystyle \qquad \qquad \lidx{E}{c}\, \lidx{p}{c} + \lidx{F}{c}\, \lidx{q}{c}\leq 0 \\[1ex]
\qquad \qquad c\in\mathcal{I}^{N_c}
\end{array}
\end{array}
    \right.\right\}
\end{array}
$}
\label{eq:rcs_multistep_decomposed}
\end{equation}
\end{problem}
}
We first outline the disjunctions in Problem \ref{prob:rcs_multistep_decomposed} rearranging set $\mathcal{P}(x_0,\bm{u})$. For this purpose, we first remove the uncertain variable $\bm{q}$. Let the row vector $\lidx{e}{c}\in\mathbb{R}^{1\times N_c}$ select the component corresponding to index $c$ in the stacked vectors, so $\lidx{p}{c}=\lidx{e}{c}\bm{p}$ and $\lidx{q}{c}=\lidx{e}{c}\bm{q}$. $\lidx{q}{c}$, therefore, reads as follows
$$
\lidx{q}{c} = \lidx{e}{c}\left(\bm{C} x_0 + \bm{D}_u\bm{u} + \bm{D}_p\bm{p}\right).
$$
The set $\calP(x_0,\bm{u})$ can then be rearranged as in the next equation.
\begin{equation}
\adjustbox{max width=0.92\linewidth}{$\displaystyle
\mathcal{P}(x_0,\bm{u})=
\left\{\bm{p}\ \middle|\
\begin{array}{l}
\lidx{E}{c}\,\lidx{p}{c}+\lidx{F}{c}\,\lidx{e}{c}(\bm{C}x_0+\bm{D}_u\bm{u}+\bm{D}_p\bm{p})\geq0\\[0.5ex]
\multicolumn{1}{c}{\vee}\\[0.5ex]
\lidx{E}{c}\,\lidx{p}{c}+\lidx{F}{c}\,\lidx{e}{c}(\bm{C}x_0+\bm{D}_u\bm{u}+\bm{D}_p\bm{p})\leq0,\\[1ex]
\multicolumn{1}{r}{c\in\mathcal{I}^{N_c}}
\end{array}
\right\}.
$}
\label{eq:set_P_1}
\end{equation}
We reformulate \eqref{eq:set_P_1} to express $\calP(x_0,\bm{u})$ as a finite set of linear disjunctions. Introduce the function $\tilde{\beta}_N:\mathbb{R}^{n_x}\times\mathbb{R}^{n_u(N+1)}\times\mathbb{R}\rightarrow\mathbb{R}$ and the matrix $\lidx{G}{c} \in \mathbb{R}^{2\times N_c}$.
\begin{align}
    \tilde{\beta}_N(x_0,\bm{u},\beta)
    \coloneqq
    \beta - \alpha(\bm{A}_{N+1} x_0 + \bm{B}_{u,N+1}\bm{u})
    \label{eq:beta_tilde}
    \\
    \lidx{G}{c}
    \coloneqq
    \lidx{F}{c}\, \lidx{e}{c}\,\bm{D}_p
    +
    \begin{bmatrix}
        0 & \cdots & 0 & \lidx{E}{c} & 0 & \cdots & 0
    \end{bmatrix}
    \label{eq:G_matrix}
\end{align}
where the block $\lidx{E}{c}$ is placed in the column corresponding to $\lidx{p}{c}$. Furthermore, define function $\lidx{\zeta}{c}:\mathbb{R}^{n_x}\times\mathbb{R}^{n_u(N+1)}\rightarrow\mathbb{R}^{2}$ as follows.
\begin{equation}
    \lidx{\zeta}{c}(x_0,\bm{u})
    :=
    \lidx{F}{c}\, \lidx{e}{c}(\bm{C}x_0 + \bm{D}_u\bm{u}).
    \label{eq:zeta}
\end{equation}

For each $c\in\mathcal{I}^{N_c}$, define the set $\lidx{\mathcal{P}}{c}(x_0,\bm{u})$ as
\begin{align}
    \lidx{\mathcal{P}}{c}(x_0,\bm{u})
    &:=
    \left\{
    \bm{p} \ \middle|\
    \begin{aligned}
    &\lidx{\zeta}{c}(x_0,\bm{u})+\lidx{G}{c}\,\bm{p} \geq 0 \\
    &\quad \vee\; \lidx{\zeta}{c}(x_0,\bm{u})+\lidx{G}{c}\,\bm{p} \leq 0
    \end{aligned}
    \right\}
    \label{eq:set_Pk}
\end{align}
Plugging Eqs. \eqref{eq:G_matrix}, \eqref{eq:zeta} into \eqref{eq:set_P_1}, we obtain $\lidx{\mathcal{P}}{c}(\bm{u})$ for each $c\in\mathcal{I}^{N_c}$, i.e.
\begin{equation}
    \mathcal{P}(x_0,\bm{u}) = \bigcap_{c=1}^{N_c}\lidx{\mathcal{P}}{c}(x_0,\bm{u}).
    \label{eq:P_as_intersection}
\end{equation}
The original set $\mathcal{P}(x_0,\bm{u})$ can be then interpreted as intersection of all sets $\lidx{\mathcal{P}}{c}(x_0,\bm{u})$. Each $\lidx{\mathcal{P}}{c}(x_0,\bm{u})$ is a disjunction, then $\mathcal{P}(x_0,\bm{u})$ is itself a collection of $N_c$ disjunctions, each choosing between two systems of two linear inequalities; therefore it can be defined equivalently as a union of $n_{\texttt{P}} \coloneqq 2^{N_c}$ polyhedral sets. We formalize this next.
Define the stacked function $\bm{\zeta}(x_0,\bm{u}):\mathbb{R}^{n_x}\times\mathbb{R}^{n_u(N+1)}\rightarrow \mathbb{R}^{2N_c}$ and the stacked matrix $\boldsymbol{G} \in \mathbb{R}^{2N_c\times N_c}$:
$$
    \bm{\zeta}(x_0,\bm{u})
    :=
    \begin{bmatrix}
        \lidx{\zeta}{1}(x_0,\bm{u}) \\
        \vdots \\
        \lidx{\zeta}{N_c}(x_0,\bm{u})
    \end{bmatrix},
    \qquad
    \boldsymbol{G}
    :=
    \begin{bmatrix}
        \lidx{G}{1} \\
        \vdots \\
        \lidx{G}{N_c}
    \end{bmatrix}.
$$
Then, for each $\ell\in\mathcal{I}^{n_{\texttt{P}}}$ define the set $\texttt{P}_{\ell}\,(x_0,\bm{u})$ and matrix $\Se_{\ell}\in\mathbb{R}^{2N_c\times2N_c}$ as follows 
\begin{align}
    \texttt{P}_{\ell}\,(x_0,\bm{u})&\coloneqq \left\{\bm{p} \ \middle|\ \Se_{\ell}\left(\bm{\zeta}(x_0,\bm{u})+\boldsymbol{G} \bm{p}\right) \geq 0\right\} \\
    \Se_{\ell}
    &\in
    \left\{
    \begin{bmatrix}
    \lidx{I}{1} &        &        \\
        & \ddots &        \\
        &        & \lidx{I}{N_c}
    \end{bmatrix}
    :
    \lidx{I}{c} \in \{\mathbb{I}_{2},-\mathbb{I}_{2}\}
    \right\}
    \label{eq:Se_selector}
\end{align}
that is, each $\mathbf{S}_{\ell}$ selects one sign pattern among $n_{\texttt{P}}$
possible sign choices and premultiplies the block $\bm{\zeta}(x_0,\bm{u}) + \boldsymbol{G} \bm{p}$. The next lemma formalizes the equivalence between $\mathcal{P}(x_0,\bm{u})$ and a union of $n_{\texttt{P}} = 2^{N_c}$ polyhedral sets.

\begin{lemma}
    The following equivalence holds.
\label{lem:union_of_intersections}
\begin{equation}
\mathcal{P}(x_0,\bm{u})=\bigcup_{\ell=1}^{n_{\texttt{P}}}\texttt{P}_{\ell}\,(x_0,\bm{u}).
\label{eq:uncertainty_polyhedral_union}
\end{equation}
\end{lemma}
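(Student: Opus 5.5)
The plan is a distributivity argument that turns the intersection of disjunctions from \eqref{eq:P_as_intersection} into a disjunction of intersections, proved as two set inclusions with sign patterns indexed by the selectors $\Se_\ell$.

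\textbf{Step 1: fix the bijection between indices and sign patterns.} By \eqref{eq:Se_selector}, each $\Se_\ell$ is block diagonal with blocks $\lidx{I}{c}\in\{\mathbb{I}_2,-\mathbb{I}_2\}$. Hence the map $\ell\mapsto(\lidx{\sigma}{c}_\ell)_{c\in\mathcal{I}^{N_c}}$, where $\lidx{I}{c}=\lidx{\sigma}{c}_\ell\,\mathbb{I}_2$ and $\lidx{\sigma}{c}_\ell\in\{+1,-1\}$, is a bijection from $\mathcal{I}^{n_{\texttt{P}}}$ onto $\{+1,-1\}^{N_c}$. Because $\Se_\ell$ is block diagonal and $\bm{\zeta},\boldsymbol{G}$ are stacked in the same blocks, the $c$-th $2$-block of $\Se_\ell(\bm{\zeta}+\boldsymbol{G}\bm{p})$ is $\lidx{\sigma}{c}_\ell(\lidx{\zeta}{c}(x_0,\bm{u})+\lidx{G}{c}\bm{p})$. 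Therefore
\[
\texttt{P}_\ell(x_0,\bm{u})=\bigcap_{c=1}^{N_c}\left\{\bm{p}\ \middle|\ \lidx{\sigma}{c}_\ell\bigl(\lidx{\zeta}{c}(x_0,\bm{u})+\lidx{G}{c}\bm{p}\bigr)\geq0\right\}.
\]

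\textbf{Step 2 ($\supseteq$).} Let $\bm{p}\in\texttt{P}_\ell$. For every $c$, the vector $\lidx{\zeta}{c}+\lidx{G}{c}\bm{p}$ is either $\geq0$ (if $\lidx{\sigma}{c}_\ell=+1$) or $\leq0$ (if $\lidx{\sigma}{c}_\ell=-1$). Either way $\bm{p}\in\lidx{\mathcal{P}}{c}(x_0,\bm{u})$ by \eqref{eq:set_Pk}, so $\bm{p}\in\mathcal{P}(x_0,\bm{u})$ by \eqref{eq:P_as_intersection}.

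\textbf{Step 3 ($\subseteq$).} Let $\bm{p}\in\mathcal{P}(x_0,\bm{u})$. For each $c$, \eqref{eq:set_Pk} guarantees at least one of the two alternatives holds. Choose $\lidx{\sigma}{c}=+1$ if $\lidx{\zeta}{c}+\lidx{G}{c}\bm{p}\geq0$, and $\lidx{\sigma}{c}=-1$ otherwise. If both alternatives hold, i.e. the vector is zero, take $+1$ by convention; this is the only place where care with ties is needed, since the choice must be well defined. Let $\ell$ be the index that Step 1's bijection assigns to this pattern. Then $\bm{p}\in\texttt{P}_\ell$.

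\textbf{Main obstacle.} There is no deep difficulty. The delicate point is justifying \eqref{eq:P_as_intersection} itself, namely that substituting \eqref{eq:G_matrix} and \eqref{eq:zeta} into \eqref{eq:set_P_1} gives exactly $\lidx{\zeta}{c}+\lidx{G}{c}\bm{p}=\lidx{E}{c}\,\lidx{p}{c}+\lidx{F}{c}\,\lidx{q}{c}$. This follows because the $E$-block of $\lidx{G}{c}$ sits in the column of $\lidx{p}{c}$, so $[0\cdots\lidx{E}{c}\cdots0]\bm{p}=\lidx{E}{c}\,\lidx{e}{c}\bm{p}$. I will verify this identity explicitly before running the distributivity argument. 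Finally, I will note that $n_{\texttt{P}}=2^{N_c}$ equals the cardinality of $\{+1,-1\}^{N_c}$, so no index is missed or duplicated.
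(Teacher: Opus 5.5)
Your proof is correct and follows the paper's route. The paper writes $\mathcal{P}(x_0,\bm{u})=\bigcap_{c}(\lidx{\mathcal{P}}{c}^{+}\cup\lidx{\mathcal{P}}{c}^{-})$, invokes distributivity of intersection over union, and matches each sign vector to a selector $\Se_\ell$. You instead prove that distributivity step directly by double inclusion, and you make explicit both the block identification of $\texttt{P}_\ell$ and the substitution behind \eqref{eq:P_as_intersection}, which the paper only asserts. One small caveat, which the paper shares: the $\subseteq$ step needs the $\Se_\ell$ to be pairwise distinct, and the count $n_{\texttt{P}}=2^{N_c}$ alone does not guarantee that, so you are relying on the intended reading of \eqref{eq:Se_selector} as an enumeration of all sign patterns.
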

\begin{proof}
Drop for simplicity $(x_0,\bm u)$. For each $c\in\mathcal{I}^{N_c}$, let $\lidx{\mathcal{P}}{c}^{+}\coloneqq \{\bm{p}\,|\, \lidx{\zeta}{c}+\lidx{G}{c}\,\bm{p} \geq 0\}, \lidx{\mathcal{P}}{c}^{-}\coloneqq \{\bm{p}\,|\, \lidx{\zeta}{c}+\lidx{G}{c}\,\bm{p} \leq 0\}$. Thus
$\lidx{\mathcal{P}}{c}
=\lidx{\mathcal{P}}{c}^{+}\cup\lidx{\mathcal{P}}{c}^{-},$
which allows to rearrange \eqref{eq:P_as_intersection} as
$
\mathcal{P}
=
\bigcap_{c=1}^{N_c}
\left(
\lidx{\mathcal{P}}{c}^{+}\cup
\lidx{\mathcal{P}}{c}^{-}
\right).
$
By distributivity of intersection over union, it further holds
\begin{equation}
\bigcap_{c=1}^{N_c}
\left(
\lidx{\mathcal{P}}{c}^{+}\cup
\lidx{\mathcal{P}}{c}^{-}
\right)
=
\bigcup_{\sigma\in\{1,-1\}^{N_c}}
\bigcap_{c=1}^{N_c}
\lidx{\mathcal{P}}{c}^{\sigma_c}.
\label{eq:distributivity_union}
\end{equation}
Every vector of signs $\sigma=[\sigma_1,\ldots,\sigma_{N_c}]$ identifies a combination of polyhedra $\lidx{\mathcal{P}}{c}^{\sigma_c}, c=1,\dots, N_c$ among the possible $2^{N_c}$ total combinations. Let $\sigma_\ell$ be one of such $2^{N_c}$ sign vectors, matching the selector $\Se_\ell$ in \eqref{eq:Se_selector} such that if $\sigma_{\ell,c}=1$, then $\lidx{I}{c}=\mathbb{I}_2$, if $\sigma_{\ell,c}=-1$, then $\lidx{I}{c}=-\mathbb{I}_2$. Then, $\bigcap_{c=1}^{N_c}\lidx{\mathcal{P}}{c}^{\sigma_c}$ can be rewritten as follows
\begin{equation}
\bigcap_{c=1}^{N_c}\lidx{\mathcal{P}}{c}^{\sigma_c}
=
\texttt P_\ell(x_0,\bm u).
\label{eq:P_ell_intersection}
\end{equation}
$n_{\mathtt{P}}$ denotes the number of possible combinations; then, using \eqref{eq:distributivity_union} and \eqref{eq:P_ell_intersection}, $\mathcal P(x_0,\bm u)$ can be rewritten as
$
\mathcal P(x_0,\bm u) = \bigcup_{\sigma\in\{1,-1\}^{N_c}}
\bigcap_{c=1}^{N_c}
\lidx{\mathcal{P}}{c}^{\sigma_c}
=
\bigcup_{\ell=1}^{n_{\mathtt{P}}}\texttt P_\ell(x_0,\bm u).
$
\end{proof}

In Problem \ref{prob:rcs_multistep_decomposed}, we remove the inner decision variable $x_{N+1}$ by substituting the expression for $x_{N+1}$ into the maximization; furthermore, we use Eq. \eqref{eq:beta_tilde} to move the nominal state contribution to the right-hand side. The quantity to maximize is then $\alpha\bm{B}_{p,N+1}\bm{p}$. Replacing $\mathcal{P}(x_0,\bm{u})$ by its union of polyhedral sets in Eq. \eqref{eq:uncertainty_polyhedral_union} gives the following problem.
{\renewcommand{\theproblem}{2-RCS-OF - Disjunctive}
\begin{problem}
\label{prob:rcs_multistep_simplified}
\begin{equation}
\begin{array}{cl}
    \underset{\bm{p}}{\sup}
    &  \alpha \bm{B}_{p,N+1} \bm{p}
    \leq
    \tilde{\beta}_N(x_0,\bm{u},\beta)
    \\[1ex]
    \mathrm{s.t.}
    & \bm{p}
    \in
    \bigcup_{\ell=1}^{ n_{\texttt{P}}}\texttt{P}_{\ell}(x_0,\bm{u})
    .
\end{array}
\end{equation}
\end{problem}
}
Define the compact polytopes
$$
\begin{gathered}
\mathcal Y_\ell(\bm{D}_p)\coloneqq
\left\{y\in\mathbb R^{N_c}\;\middle|\;
\Se_\ell\boldsymbol{G}(\bm{D}_p) y\ge0,\ \|y\|_1\le1\right\},\\
\ell\in\mathcal I^{n_{\texttt P}},
\end{gathered}
$$
and denote their vertex sets by $V(\mathcal Y_\ell(\bm{D}_p))$.
\begin{assumption}
\label{ass:dual_feasibility_vertices}
For every $\ell\in\mathcal I^{n_{\texttt P}}$,
$$
\alpha\bm B_{p,N+1}\hat y\le0
\qquad\forall\hat y\in V(\mathcal Y_\ell(\bm{D}_p)).
$$
\end{assumption}
\par\addvspace{0.5\baselineskip}
\noindent
This condition can be checked offline, since $\boldsymbol G(\bm D_p)$ depends only on the prescribed matrix $\bm D_p$ for the fixed outer factors.
We are therefore ready to formulate the main lemma.
\begin{lemma}
\label{lem:decomposition_dual_multistep}
Under Assumption~\ref{ass:dual_feasibility_vertices}, the constraint in Problem \ref{prob:rcs_multistep_simplified} is satisfied if and only if
\begin{equation}
\begin{array}{rcl}
\underset{\ell\in\mathcal{I}^{n_{\texttt{P}}}}{\max}&\underset{\boldsymbol{\gamma}_{\ell}}{\inf}&
\boldsymbol{\gamma}_{\ell}^{\top}\Se_{\ell}\bm{\zeta}(x_0,\bm{u})\le\tilde{\beta}_N(x_0,\bm{u},\beta)\\[0.5ex]
& \mathrm{s.t.}& \bm{B}_{p,N+1}^{\top}\alpha^\top+(\Se_{\ell}\;\boldsymbol{G})^{\top}\boldsymbol{\gamma}_{\ell}=0,\quad \boldsymbol{\gamma}_{\ell}\ge0.
\end{array}
\label{eq:decomposition_dual_multistep}
\end{equation}
\end{lemma}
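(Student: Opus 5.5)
Because $\bigcup_\ell \texttt P_\ell$ is a finite union, the supremum splits over the pieces:
$$\sup_{\bm p\in\bigcup_\ell\texttt P_\ell}\alpha\bm B_{p,N+1}\bm p=\max_{\ell}\,\sup_{\bm p\in\texttt P_\ell}\alpha\bm B_{p,N+1}\bm p,$$
with the convention that an empty $\texttt P_\ell$ contributes $-\infty$. The constraint of Problem~\ref{prob:rcs_multistep_simplified} therefore holds if and only if, for every $\ell$, the linear program
$$\sup\{\alpha\bm B_{p,N+1}\bm p : \Se_\ell\boldsymbol G\bm p\ge -\Se_\ell\bm\zeta\}$$
has value at most $\tilde\beta_N$. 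I would then show that each such inner value equals the corresponding inner infimum in \eqref{eq:decomposition_dual_multistep}. Taking the maximum over $\ell$ on both sides gives the claim.

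For a fixed $\ell$, I would write the LP dual. With $\boldsymbol\gamma_\ell\ge0$ the multiplier for $\Se_\ell\boldsymbol G\bm p+\Se_\ell\bm\zeta\ge0$, the Lagrangian is
$$\alpha\bm B_{p,N+1}\bm p+\boldsymbol\gamma_\ell^\top(\Se_\ell\boldsymbol G\bm p+\Se_\ell\bm\zeta).$$
Its supremum over free $\bm p$ is finite exactly when $\bm B_{p,N+1}^\top\alpha^\top+(\Se_\ell\boldsymbol G)^\top\boldsymbol\gamma_\ell=0$, and in that case it equals $\boldsymbol\gamma_\ell^\top\Se_\ell\bm\zeta$. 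This is precisely the dual program in the lemma. Weak duality gives primal value $\le$ dual value in general, which already yields the ``if'' direction: if every dual infimum is at most $\tilde\beta_N$, then every primal supremum is too, so the robust constraint holds.

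For the ``only if'' direction I need strong duality, meaning equality of the two values, and this is where Assumption~\ref{ass:dual_feasibility_vertices} is used. LP strong duality holds whenever at least one of the two programs is feasible, with the convention that an infeasible dual has value $+\infty$ and an unbounded primal has value $+\infty$. The only problematic case is primal infeasible together with dual infeasible: the primal value is then $-\infty$ while the dual value is $+\infty$, and the equivalence would fail. I would therefore show that the assumption guarantees dual feasibility for every $\ell$.

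By Farkas' lemma, the dual is feasible if and only if there is no $y$ with $\Se_\ell\boldsymbol G y\ge0$ and $\alpha\bm B_{p,N+1}y>0$. In other words, $\alpha\bm B_{p,N+1}y\le0$ must hold on the recession cone $\{y:\Se_\ell\boldsymbol G y\ge0\}$ of $\texttt P_\ell$. Intersecting that cone with the unit $\ell_1$-ball gives $\mathcal Y_\ell$, which is a compact polytope. Every point of the cone is a nonnegative multiple of a point of $\mathcal Y_\ell$, and every point of $\mathcal Y_\ell$ is a convex combination of its vertices. Since a linear function that is nonpositive on all vertices is nonpositive on their convex hull, the vertex condition of the assumption is exactly this recession-cone condition. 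Hence the dual is feasible for every $\ell$.

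With dual feasibility in hand, strong duality gives equality of the two values for each $\ell$. If $\texttt P_\ell$ is nonempty, the primal is feasible and also bounded, since the dual is feasible, so both optima are finite and equal. If $\texttt P_\ell$ is empty, the primal value is $-\infty$, and strong duality forces the dual value to be $-\infty$ as well; the dual is then unbounded below, so the inner infimum trivially satisfies the inequality.

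The main obstacle is exactly this dual-feasibility step. Without it, an empty piece $\texttt P_\ell$ together with an infeasible dual would give an inner infimum of $+\infty$, and the ``only if'' direction would fail. The first thing to check carefully is the Farkas alternative with the correct signs, and that reducing from the cone to the vertices of $\mathcal Y_\ell$ loses nothing.
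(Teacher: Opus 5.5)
Your proposal is correct and follows the same route as the paper. You split the supremum over the finite union, pass from Assumption~\ref{ass:dual_feasibility_vertices} to the recession-cone condition (normalization into $\mathcal Y_\ell$ plus convex combinations of vertices), use Farkas' lemma for dual feasibility, and conclude branchwise strong duality over the extended reals. Your observation that the ``if'' direction needs only weak duality, and your explicit handling of empty pieces $\texttt P_\ell$, are slightly more careful than the paper's one-line appeal to tightness but do not change the argument.
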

\vspace{0.5\baselineskip}
\begin{proof}
By convention, $\sup\varnothing=-\infty$.
The constraint in Problem \ref{prob:rcs_multistep_simplified} is equivalent to
\begin{equation}
    \underset{\ell\in\mathcal{I}^{n_{\texttt{P}}}}{\max}\sup_{\bm{p} \in \texttt{P}_{\ell}(x_0,\bm{u})}
    \; \alpha \bm{B}_{p,N+1} \bm{p}
    \leq
    \tilde{\beta}_N(x_0,\bm{u},\beta).
    \label{eq:constraint_simple_multiphase}
\end{equation}
Introduce a nonnegative multiplier $\bm\gamma_\ell\in\mathbb R^{2N_c}_+$ for each $\ell$.
By Assumption~\ref{ass:dual_feasibility_vertices}, the minimization problem dual to the inner maximization in \eqref{eq:constraint_simple_multiphase} is feasible
 for every $\ell$. Indeed, since $\mathcal Y_\ell(\bm{D}_p)$ is a compact polytope and the objective is linear, Assumption~\ref{ass:dual_feasibility_vertices} implies
\begin{equation}
\alpha\bm B_{p,N+1}y\le0
\qquad\forall y\in\mathcal Y_\ell(\bm{D}_p).
\label{eq:dual_feasibility_normalized}
\end{equation}
Now let $y\ne0$ satisfy $\Se_\ell\boldsymbol{G} y\ge0$.
Its normalization $y/\|y\|_1$ belongs to $\mathcal Y_\ell(\bm{D}_p)$; applying Eq.~\eqref{eq:dual_feasibility_normalized} to this normalized vector and multiplying by $\|y\|_1$ gives $\alpha\bm B_{p,N+1}y\le0$. Consequently, Eq.~\eqref{eq:dual_feasibility_normalized} implies
\begin{equation}
\Se_\ell\boldsymbol{G} y\ge0
\quad\Longrightarrow\quad
\alpha\bm B_{p,N+1}y\le0,
\label{eq:farkas_dual_feasibility}
\end{equation}
which holds trivially for $y=0$.
By Farkas' lemma \cite{Bertsimas1997-lo}, Eq.~\eqref{eq:farkas_dual_feasibility} is equivalent to the existence of $\bm\gamma_\ell\ge0$ satisfying
\begin{equation}
(\Se_\ell\boldsymbol{G})^\top\bm\gamma_\ell
=-\bm B_{p,N+1}^\top\alpha^\top.
\label{eq:dual_feasibility_certificate}
\end{equation}
Weak duality ensures that
\begin{equation}
\adjustbox{max width=\linewidth}{$
 \begin{array}{l}\displaystyle\sup_{\bm{p} \in \texttt{P}_{\ell}(x_0, \bm{u})}
    \; \alpha \bm{B}_{p,N+1} \bm{p} \\
    {}
 \end{array} \leq \begin{array}{l}
    \underset{\boldsymbol{\gamma}_{\ell}}{\inf}\;
    \boldsymbol{\gamma}_{\ell}^{\top}\Se_{\ell}\bm{\zeta}(x_0,\bm{u}) \\
\mathrm{s.t.} \;
\bm{B}_{p,N+1}^{\top}\alpha^\top+(\Se_{\ell}\;\boldsymbol{G})^{\top}\boldsymbol{\gamma}_{\ell}=0,\\
\phantom{\textrm{s.t.} \;} \;
\boldsymbol{\gamma}_{\ell}\ge0
    \end{array}
$}
\label{eq:branchwise_strong_duality}
\end{equation}
As established by Eq.~\eqref{eq:dual_feasibility_certificate}, the dual minimization problem is feasible; then inequality in \eqref{eq:branchwise_strong_duality} is tight over the extended real domain and strong duality holds. Plugging Eq.~\eqref{eq:branchwise_strong_duality} into Eq.~\eqref{eq:constraint_simple_multiphase} proves the claim.
\end{proof}
\begin{lemma}
\label{lem:G_full_column_rank}
The matrix $\boldsymbol{G}$ has full column rank.
\end{lemma}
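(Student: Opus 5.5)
The plan is to show directly that $\boldsymbol{G}y=0$ forces $y=0$ for $y\in\mathbb{R}^{N_c}$, working one channel at a time. The key observation is that each block row $\lidx{G}{c}$ involves the unknown $y$ only through two scalars, $\lidx{y}{c}$ and $\lidx{e}{c}\bm{D}_p y$. It also involves a $2\times2$ matrix built from the outer factors of $\lidx{M}{c}$. The argument should therefore not depend on any structure of $\bm{D}_p$.

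\emph{Step 1 (blockwise rewriting).} Fix $y$ with $\boldsymbol{G}y=0$ and set $w\coloneqq\bm{D}_p y$, with components $\lidx{w}{c}=\lidx{e}{c}w$. By \eqref{eq:G_matrix}, the $c$-th block of $\boldsymbol{G}y=0$ reads
$$
\lidx{E}{c}\,\lidx{y}{c}+\lidx{F}{c}\,\lidx{w}{c}=0 .
$$
Recalling $\lidx{E}{c}=[g_p,h_p]^\top$ and $\lidx{F}{c}=[g_q,h_q]^\top$ (for the factors of $\lidx{M}{c}$), this is the $2\times2$ system
$$
\begin{bmatrix} g_q & g_p\\ h_q & h_p\end{bmatrix}\begin{bmatrix}\lidx{w}{c}\\ \lidx{y}{c}\end{bmatrix}=0,
$$
whose coefficient matrix is exactly $[g\ \ h]^\top$.

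\emph{Step 2 (nonsingularity of $[g\ \ h]$).} I will show that $g$ and $h$ are linearly independent. Since $\lidx{M}{c}$ is symmetric, $2\times2$ and indefinite, it has one strictly positive and one strictly negative eigenvalue, so $\det \lidx{M}{c}<0$ and it has rank $2$. Consequently $V$ and $U$ are nonzero and orthogonal, being scaled orthonormal eigenvectors. Hence $g=V+U$ and $h=V-U$ are independent: if $ag+bh=0$, then $(a+b)V+(a-b)U=0$, which forces $a=b=0$. Equivalently, $\det\lidx{M}{c}=-\tfrac14(g_qh_p-g_ph_q)^2$, so $\det[g\ \ h]\neq0$. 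This is the only step needing care. It relies on reading ``indefinite'' as having eigenvalues of both signs, as in the construction of $V,U$; a singular semidefinite multiplier would make $g,h$ collinear and the argument would fail.

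\emph{Step 3 (conclusion).} By Step 2 the $2\times2$ system in Step 1 has only the trivial solution, so $\lidx{y}{c}=0$ (and $\lidx{w}{c}=0$) for every $c\in\mathcal{I}^{N_c}$. Hence $y=0$, and $\boldsymbol{G}$ has trivial kernel, i.e. full column rank $N_c$. Note that the coupling through $\bm{D}_p$ never has to be untangled: each channel contributes two independent equations that already pin down its own $\lidx{y}{c}$.
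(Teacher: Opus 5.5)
Your proof is correct and takes essentially the same approach as the paper: each block $\lidx{G}{c}y=0$ is a $2\times2$ system in $(\lidx{y}{c},\lidx{e}{c}\bm{D}_p y)$ whose coefficient matrix, built from the outer factors, is nonsingular, so every $\lidx{y}{c}=0$ regardless of $\bm{D}_p$. Your Step 2 adds a justification that the paper only asserts, namely that $g,h$ are independent because $V,U$ are nonzero and orthogonal, or equivalently because $\det\lidx{M}{c}=-\tfrac14(\det[g\ \ h])^2<0$.
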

\begin{proof}
Let $\bm{d}\in\ker(\boldsymbol{G})$. For each $c\in\mathcal{I}^{N_c}$,
Eq.~\eqref{eq:G_matrix} gives
$$
0=\lidx{G}{c}\,\bm{d}
=[\,\lidx{E}{c}\ \lidx{F}{c}\,]
\begin{bmatrix}
\lidx{e}{c}\,\bm{d}\\
\lidx{e}{c}\,\bm{D}_p\bm{d}
\end{bmatrix}.
$$
Since the outer factors of the indefinite matrix $M_{k(c)}^{j(c)}$ are linearly
independent, $[\,\lidx{E}{c}\ \lidx{F}{c}\,]$ is nonsingular. Thus
$\lidx{e}{c}\,\bm{d}=0$ for every $c$. The selectors $\lidx{e}{c}$ cover all
coordinates of $\bm{d}$, so $\bm{d}=0$.
\end{proof}

By Lemma~\ref{lem:G_full_column_rank}, $\dim\ker(\boldsymbol{G}^\top)=N_c$.
Let the columns of $\bm{N}\in\mathbb{R}^{2N_c\times N_c}$ form a basis of
this nullspace. Since $\boldsymbol{G}^\top$ is surjective, each solution
of the dual equality has the form
\begin{align*}
\boldsymbol{\gamma}_{\ell} &=
\Se_{\ell}(\bar{\boldsymbol{\gamma}} + \bm{N} \bm{v}_{\ell} ) \\
\bar{\boldsymbol{\gamma}} &\coloneqq - (\,\bm{G}^\top)^\dagger \bm{B}_{p,N+1}^{\top}\alpha^\top
\end{align*}
where $\bm{v}_{\ell} \in\mathbb{R}^{N_c}$ is a vector variable. Define the admissible polyhedral sets $\mathcal{V}_{\ell}, \ell\in\mathcal{I}^{n_{\texttt{P}}}$ as follows.
$$
\mathcal{V}_{\ell} \coloneqq \{ \bm{v} \ \mid \ \Se_\ell\, \bm{N} \bm{v} \geq - \Se_\ell \bar{\boldsymbol{\gamma}}\}.
$$
Then $\boldsymbol{\gamma}_{\ell}\ge0\Leftrightarrow\bm{v}_{\ell} \in \mathcal{V}_{\ell}$. Therefore $\bm{v}_{\ell}$ is a \emph{free variable} constrained to the polyhedron
$\mathcal{V}_{\ell}$. Denote its vertex set by $V(\mathcal{V}_{\ell})$ and its vertices by
$\hat{\bm{v}}^{(k)}_{\ell}\in V(\mathcal{V}_{\ell})$,
$k\in\mathcal{I}^{\texttt{v}_{\ell}}$, with
$\texttt{v}_{\ell}:=\lvert V(\mathcal{V}_{\ell})\rvert$. 

We are ready to formulate the main theorem. To do this, we introduce the helper function $\bm{\xi}(x_0,\bm{u}):\mathbb{R}^{n_x}\times\mathbb{R}^{n_u(N+1)}\rightarrow \mathbb{R}^{N_c}$ defined as
$\bm{\xi}(x_0,\bm{u})\coloneqq \boldsymbol{N}^{\top} \bm{\zeta}(x_0,\bm{u})$.
Define the function $\eta_N:\mathbb{R}^{n_x}\times\mathbb{R}^{n_u(N+1)}\times\mathbb{R}\rightarrow\mathbb{R}$ as follows.
$$
\eta_N(x_0,\bm{u},\beta)
\coloneqq\tilde{\beta}_N(x_0,\bm{u},\beta)
-\bm{\zeta}^\top(x_0,\bm{u})\bar{\bm{\gamma}}.
$$
\begin{theorem}
    Conditions in Lemma \ref{lem:decomposition_dual_multistep} are satisfied if function $\bm{\xi}(x_0,\bm{u})$ satisfies the following condition
    \begin{equation}
    \begin{array}{c}
        \displaystyle
    \underset{\ell\in\mathcal{I}^{n_{\texttt{P}}}}{\max}\underset{k\in\mathcal{I}^{\texttt{v}_{\ell}}}{\min}\bm{\xi}^{\top}(x_0,\bm{u}) \hat{\bm{v}}^{(k)}_{\ell}\leq \eta_N(x_0,\bm{u},\beta).
    \end{array}
    \label{eq:reformulated_maxmin}
\end{equation}
    \label{thm:closed_form_multistep}
\end{theorem}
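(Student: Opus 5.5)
The plan is to show directly that \eqref{eq:reformulated_maxmin} forces, for every $\ell\in\mathcal{I}^{n_{\texttt{P}}}$, the inner infimum in \eqref{eq:decomposition_dual_multistep} to be at most $\tilde{\beta}_N(x_0,\bm{u},\beta)$. The argument has two parts. First, the affine parameterization $\boldsymbol{\gamma}_\ell=\Se_\ell(\bar{\boldsymbol{\gamma}}+\bm{N}\bm{v}_\ell)$, $\bm{v}_\ell\in\mathcal{V}_\ell$, produces dual-feasible points. Second, on these points the dual objective splits into the constant $\bm{\zeta}^\top\bar{\boldsymbol{\gamma}}$ plus the linear term $\bm{\xi}^\top\bm{v}_\ell$. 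Evaluating at vertices then gives an upper bound on the infimum. Only the sufficiency direction is claimed, so I do not need to prove that the parameterization describes all dual-feasible points.

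\emph{Step 1 (feasibility of the parameterization).} Fix $\ell$ and $\bm{v}\in\mathcal{V}_\ell$, and set $\boldsymbol{\gamma}=\Se_\ell(\bar{\boldsymbol{\gamma}}+\bm{N}\bm{v})$. Since $\Se_\ell$ is diagonal with $\pm1$ entries, $\Se_\ell^\top\Se_\ell=\mathbb{I}$. Hence
\[
(\Se_\ell\boldsymbol{G})^\top\boldsymbol{\gamma}=\boldsymbol{G}^\top\bar{\boldsymbol{\gamma}}+\boldsymbol{G}^\top\bm{N}\bm{v}=\boldsymbol{G}^\top\bar{\boldsymbol{\gamma}}.
\]
By Lemma~\ref{lem:G_full_column_rank}, $\boldsymbol{G}^\top$ is surjective, so $\boldsymbol{G}^\top(\boldsymbol{G}^\top)^\dagger=\mathbb{I}$. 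Therefore $\boldsymbol{G}^\top\bar{\boldsymbol{\gamma}}=-\bm{B}_{p,N+1}^\top\alpha^\top$, and the equality constraint holds. Nonnegativity $\boldsymbol{\gamma}\ge0$ is exactly the defining inequality of $\mathcal{V}_\ell$.

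\emph{Step 2 (objective splitting).} Using $\Se_\ell^2=\mathbb{I}$ again,
\[
\boldsymbol{\gamma}^\top\Se_\ell\bm{\zeta}=(\bar{\boldsymbol{\gamma}}+\bm{N}\bm{v})^\top\bm{\zeta}=\bm{\zeta}^\top\bar{\boldsymbol{\gamma}}+\bm{\xi}^\top\bm{v}.
\]
Restricting the infimum to the feasible points of Step~1, and then to the vertices $\hat{\bm{v}}^{(k)}_\ell\in V(\mathcal{V}_\ell)$, which belong to $\mathcal{V}_\ell$, gives
\[
\inf_{\boldsymbol{\gamma}_\ell}\boldsymbol{\gamma}_\ell^\top\Se_\ell\bm{\zeta}\le\bm{\zeta}^\top\bar{\boldsymbol{\gamma}}+\min_{k\in\mathcal{I}^{\texttt{v}_\ell}}\bm{\xi}^\top\hat{\bm{v}}^{(k)}_\ell.
\]
Applying \eqref{eq:reformulated_maxmin} to the right-hand side and using the definition of $\eta_N$ yields the bound $\bm{\zeta}^\top\bar{\boldsymbol{\gamma}}+\eta_N=\tilde{\beta}_N$. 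Taking the maximum over $\ell$ gives \eqref{eq:decomposition_dual_multistep}.

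\emph{Main obstacle: the vertex sets are well defined and nonempty.} For nonemptiness of $\mathcal{V}_\ell$, Assumption~\ref{ass:dual_feasibility_vertices} and the Farkas argument in the proof of Lemma~\ref{lem:decomposition_dual_multistep} give some $\boldsymbol{\gamma}_\ell\ge0$ solving the dual equality. Then $\Se_\ell\boldsymbol{\gamma}_\ell-\bar{\boldsymbol{\gamma}}\in\ker\boldsymbol{G}^\top$, so it equals $\bm{N}\bm{v}$ for some $\bm{v}$, and this $\bm{v}$ lies in $\mathcal{V}_\ell$. For the existence of vertices, $\Se_\ell\bm{N}$ has full column rank because $\bm{N}$ is a basis matrix and $\Se_\ell$ is invertible. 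Hence $\mathcal{V}_\ell$ has a trivial lineality space, i.e.\ it is pointed, and a nonempty pointed polyhedron has at least one vertex, so $\texttt{v}_\ell\ge1$. Even without this, the convention $\min\varnothing=+\infty$ would make \eqref{eq:reformulated_maxmin} fail, so the implication would still hold vacuously.
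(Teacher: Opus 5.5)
Your proof is correct and follows the paper's route. Both arguments parameterize the dual-feasible set as $\boldsymbol{\gamma}_\ell=\Se_\ell(\bar{\boldsymbol{\gamma}}+\bm{N}\bm{v}_\ell)$, split the objective into $\bm{\zeta}^\top\bar{\boldsymbol{\gamma}}+\bm{\xi}^\top\bm{v}_\ell$, and bound the infimum over $\mathcal{V}_\ell$ by the minimum over its vertices. Where the paper invokes LP optimality at vertices (with a separate remark for the unbounded case), you only use that vertices are feasible points, which is all sufficiency needs; your pointedness argument also supplies the existence of vertices that the paper leaves implicit.
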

\begin{proof}
    Drop dependency on $x_0, \bm{u}, \beta$ for simplicity. Each infimum in Eq. \eqref{eq:decomposition_dual_multistep} can then be rearranged as follows.
    \begin{align*}
    \underset{\boldsymbol{\gamma}_{\ell}}{\mathrm{inf}}\;
\boldsymbol{\gamma}_{\ell}^{\top}\Se_{\ell}\bm{\zeta} &= \underset{\boldsymbol{\gamma}_{\ell}}{\mathrm{inf}}\;
(\bar{\boldsymbol{\gamma}} + \bm{N} \bm{v}_{\ell} )^\top\Se^\top_{\ell}\Se_{\ell}\bm{\zeta} \\
&= \bar{\boldsymbol{\gamma}}^\top \bm{\zeta}  + \underset{\bm{v}_{\ell}}{\mathrm{inf}}\;
( \bm{N} \bm{v}_{\ell} )^\top\bm{\zeta} \\
&= \bm{\zeta}^\top\bar{\boldsymbol{\gamma}} + \underset{\bm{v}_{\ell}}{\mathrm{inf}}\;
\bm{\xi}^\top \bm{v}_{\ell}.
\end{align*}
    Therefore, each $\ell$-th infimum with its $\ell$-th equality constraint from Eq. \eqref{eq:decomposition_dual_multistep} can be simplified as follows.
\begin{equation}
\begin{array}{rl}
    \underset{v_{\ell}}{\mathrm{inf}}\;
    \bm{\xi}^{\top} v_{\ell}
    \le \tilde{\beta}_N - \bm{\zeta}^{\top} \bar{\boldsymbol{\gamma}}
    & \;\;v_{\ell} \in \mathcal{V}_{\ell}
    \quad \ell \in\mathcal{I}^{n_{\texttt{P}}}
\end{array}
\label{eq:intermediate_lemma4_theorem5}
\end{equation}
Fix $(x_0,\bm{u})$, hence $\bm{\xi}$, and let $\hat{\bm{v}}^{(k)}_{\ell}$ denote the $k$-th vertex of $\mathcal{V}_{\ell}$. The infimum of $\bm{\xi}^{\top} \bm{v}_{\ell}$, if finite, is obtained for $\bm{v}_{\ell}$ equal to one of the vertices of $\mathcal{V}_{\ell}$, by linear programming optimality. Eq. \eqref{eq:intermediate_lemma4_theorem5} can be then written as follows. 
\begin{equation}
\underset{k\in\mathcal{I}^{\texttt{v}_{\ell}}}{\min}\bm{\xi}^{\top}(x_0,\bm{u}) \hat{\bm{v}}^{(k)}_{\ell}\leq \eta_N(x_0,\bm{u},\beta)
\label{eq:final_lemma4_theorem5}
\end{equation}
The infimum of $\bm{\xi}^{\top} \bm{v}_{\ell}$, if not finite, is upper bounded by the minimum in \eqref{eq:final_lemma4_theorem5}.
Applying this reformulation for all indices $\ell$ in Eq. \eqref{eq:decomposition_dual_multistep} provides Eq. \eqref{eq:reformulated_maxmin}.
\end{proof}

Using Theorem \ref{thm:closed_form_multistep}, we ensure satisfaction of the condition in Prob. \ref{prob:rcs_multistep_simplified} by solving the following robust problem.
{\renewcommand{\theproblem}{2-OF - MaxMin}
\begin{problem}
\label{prob:innerproblem_multistep_integer}
\begin{equation}
\adjustbox{max width=\linewidth}{$\displaystyle
\begin{array}{cl}
    \underset{x_0,\!\bm{u},\!\beta}{\min.} & \mathcal{L}(\bm{u}, \beta) \\[1ex]
    \mathrm{s.t.} & \bm{u}\in\mathcal{U},\ \beta\in\mathcal{B},\ x_0\in\mathcal{X}_0 \\[1ex]
    &
\begin{array}{l}
     \underset{{\ell\in\mathcal{I}^{n_{\texttt{P}}}}}{\max}\underset{k\in\mathcal{I}^{\texttt{v}_{\ell}}}{\min} \bm{\xi}^\top(x_0,\bm{u}) \hat{\bm{v}}^{(k)}_{\ell} \leq \eta_N(x_0,\bm{u},\beta)
    \end{array}
\end{array}
$}
\label{eq:innerproblem_multistep_integer}
\end{equation}
\end{problem}}

We have started from an SIP problem and ensured satisfaction of its constraints by means of a finite nonconvex problem. This proposed reformulation allows to bypass the bilevel nature of the SIP problem and make the problem solvable without dedicated solvers. 

\subsection{Two-step convex approach}
\label{sec:convexification}

We here propose a two-step approach to solve Problem \ref{prob:innerproblem_multistep_integer}. We first solve a convex relaxation based on convex-hull relaxation \cite{Balas1979-du}, and we ensure robustness with a second convex step. Let $\bm{w}\coloneqq(x_0,\bm{u},\beta)$. Constraint \eqref{eq:reformulated_maxmin} can be then rewritten as follows 
$$
\underset{{\ell\in\mathcal{I}^{n_{\texttt{P}}}}}{\max}\underset{k\in\mathcal{I}^{\texttt{v}_{\ell}}}{\min} \bm{\xi}^\top(\bm{w}) \hat{\bm{v}}^{(k)}_{\ell} \leq \eta_N(\bm{w})
$$
which is equivalent to
\begin{equation}
\forall \ell \in \mathcal{I}^{n_{\texttt{P}}}\;\;
\boxed{\exists k \in\mathcal{I}^{\mathtt{v}_\ell} \quad \text{s.t.} \quad
\boldsymbol{\xi}^\top(\bm{w})\hat{\bm{v}}^{(k)}_{\ell} \leq \eta_N(\bm{w})}
\label{eq:bilinear}
\end{equation}
\subsubsection{Step 1 - Convex-Hull relaxation}
The boxed constraint in Eq. \eqref{eq:bilinear} is nonconvex; we relax it with a convex-hull formulation in the variable $\bm{w}$. Since $\mathcal{U}$, $\mathcal{B}$, $\mathcal{X}_0$ are polytopes, their Cartesian product
$\mathcal{W}\coloneqq\mathcal{X}_0\times\mathcal{U}\times\mathcal{B}=\{\bm{w}\mid \boldsymbol{A}_{\mathcal{W}}\bm{w}\le\boldsymbol{b}_{\mathcal{W}}\}$
is a polytope. For each $\ell\in\mathcal{I}^{n_{\texttt{P}}}$ and vertex $k\in\mathcal{I}^{\mathtt{v}_\ell}$, introduce the new variables $\bm{w}_{\ell,k}$ (copies of $\bm{w}$) and $\lambda_{\ell,k}\ge0$, stacked as
$$
\boldsymbol{W}_\ell\coloneqq\begin{bmatrix}\bm{w}_{\ell,1}&\cdots&\bm{w}_{\ell,\mathtt{v}_\ell}\end{bmatrix}, \qquad
\boldsymbol{\lambda}_\ell\coloneqq\begin{bmatrix}\lambda_{\ell,1}&\cdots&\lambda_{\ell,\mathtt{v}_\ell}\end{bmatrix}^{\mathsf T}.
$$
Collect the vertices as $\boldsymbol{V}_\ell\coloneqq\begin{bmatrix}\hat{\bm{v}}^{(1)}_\ell&\cdots&\hat{\bm{v}}^{(\mathtt{v}_\ell)}_\ell\end{bmatrix}$, and define the stacked maps
$$
\boldsymbol{\Xi}(\boldsymbol{W}_\ell)\coloneqq\begin{bmatrix}\boldsymbol{\xi}(\bm{w}_{\ell,1})&\cdots&\boldsymbol{\xi}(\bm{w}_{\ell,\mathtt{v}_\ell})\end{bmatrix},
$$
$$
\boldsymbol{H}_N(\boldsymbol{W}_\ell)\coloneqq\begin{bmatrix}\eta_N(\bm{w}_{\ell,1})\\ \vdots \\ \eta_N(\bm{w}_{\ell,\mathtt{v}_\ell})\end{bmatrix}.
$$
The new variables satisfy
$$
\bm{w}=\boldsymbol{W}_\ell\bm{1}, \qquad \bm{1}^{\mathsf T}\boldsymbol{\lambda}_\ell=1, \qquad \boldsymbol{\lambda}_\ell\ge0,
$$
$$
\boldsymbol{A}_{\mathcal{W}}\boldsymbol{W}_\ell\le\boldsymbol{b}_{\mathcal{W}}\boldsymbol{\lambda}_\ell^{\mathsf T}, \qquad
\operatorname{diag}\!\left(\boldsymbol{V}_\ell^{\mathsf T}\boldsymbol{\Xi}(\boldsymbol{W}_\ell)\right)\le\boldsymbol{H}_N(\boldsymbol{W}_\ell).
$$
Problem \ref{prob:innerproblem_multistep_integer} can therefore be convexified through the following convex-hull relaxation.
{\renewcommand{\theproblem}{2-OF - CH}
\begin{problem}
\label{prob:robust_counterpart_multistep_CH}
\begin{equation}
\adjustbox{max width=\linewidth}{$\displaystyle
\begin{array}{cl}
\underset{\bm{w},\,\{\boldsymbol{W}_\ell,\boldsymbol{\lambda}_\ell\}}{\min.}&
\mathcal{L}(\bm{w})\\[0.8ex]
\mathrm{s.t.}&
\left.\begin{array}{l}
\bm{w}=\boldsymbol{W}_\ell\bm{1}, \quad \bm{1}^{\mathsf T}\boldsymbol{\lambda}_\ell=1,\\[1.2ex]
\boldsymbol{A}_{\mathcal{W}}\boldsymbol{W}_\ell\le\boldsymbol{b}_{\mathcal{W}}\boldsymbol{\lambda}_\ell^{\mathsf T},\\[1ex]
\operatorname{diag}\!\left(\boldsymbol{V}_\ell^{\mathsf T}\boldsymbol{\Xi}(\boldsymbol{W}_\ell)\right)\le\boldsymbol{H}_N(\boldsymbol{W}_\ell),\\[1ex]
\boldsymbol{\lambda}_\ell\ge0
\end{array}\right| \;\; \ell \in \mathcal{I}^{n_{\texttt{P}}}.
\end{array}
$}
\label{eq:robust_counterpart_multistep_CH}
\end{equation}
\end{problem}}
\begin{remark}
Problem \ref{prob:robust_counterpart_multistep_CH} provides a lower bound on the optimal objective of Problem \ref{prob:innerproblem_multistep_integer}. If the state constraints, at the optimal solution, are active for dual variables belonging to $\mathcal{V}_{\ell}$ with a single vertex, the relaxation in Problem \ref{prob:robust_counterpart_multistep_CH} is lossless. Otherwise, its solution provides an efficient guess for one vertex of each polyhedron $\mathcal{V}_{\ell}$.
\end{remark}
\subsubsection{Step 2 - Solution Robustification}
Denote the solution of Problem \ref{prob:robust_counterpart_multistep_CH} by $\bm{w}^{*}$, and select for each $\ell\in\mathcal{I}^{n_{\texttt{P}}}$ the index $k_\ell^*\coloneqq
\underset{k\in\mathcal{I}^{\texttt{v}_{\ell}}}
{\operatorname{argmin}}\;
\boldsymbol{\xi}^{\top}(\bm{w}^{*})\hat{\bm{v}}_{\ell}^{(k)}$. Then we define
$$
\bm{v}_{\ell}^*\coloneqq\hat{\bm{v}}_{\ell}^{(k_\ell^*)},
\;
\bm{V}^*\coloneqq
\left[\begin{array}{ccc}
\bm{v}_{1}^{*} &
\dots &
\bm{v}_{n_{\texttt{P}}}^{*}
\end{array}\right].
$$
We then restore robustness by solving the following problem.

{\renewcommand{\theproblem}{2-OF - Robust}
\begin{problem}
\label{prob:robust_counterpart_multistep_robust_trace}
\begin{equation}
\begin{array}{cl}
\displaystyle\min_{\bm{w}} & \mathcal{L}(\bm{w})\\[0.8ex]
\mathrm{s.t.} &
\bm{w}\in\mathcal{W},\\[0.8ex]
&
\bm{\xi}^\top(\bm{w})\bm{V}^*
\leq \eta_N(\bm{w})\bm{1}.
\end{array}
\label{eq:robust_counterpart_multistep_robust_trace}
\end{equation}
\end{problem}}

\subsection{Complete algorithm}
\label{sec:algorithm}
We have derived finite nonsmooth constraints that guarantee robust satisfaction of the state constraints in Problem \ref{prob:robust_counterpart}. The proposed workflow is summarized in Algorithm \ref{alg:algorithm_scalar}, while the two-step approach is reported in Algorithm \ref{alg:two_pass_convex_hull}. In the development we have assumed a single constraint. For a vector constraint / constraint at multiple time steps we can use the same construction for each constraint and solve a single optimization problem, as done in the example in Sec \ref{sec:numerical_example}.

\begin{algorithm}[h!]
\caption{Robust Optimization with Nonconvex Uncertainty Sets}
\label{alg:algorithm_scalar}
\begin{algorithmic}[1]
\Require Parameter $N$; multipliers $\lidx{M}{c}$, variables $x_0,\bm{u},\bm{p}$
\State Assemble matrix $\bm{G}$ and function $\bm{\zeta}(x_0,\bm{u})$
\State Compute nullspace $\bm{N}$
\For{$\ell\in\mathcal{I}^{n_{\texttt{P}}}$}
    \State Compute the vertex set $\{\hat{\bm{v}}_{\ell}^{(k)}\}_{k\in\mathcal{I}^{\texttt{v}_{\ell}}}$
\EndFor
\State Run Algorithm~\ref{alg:two_pass_convex_hull}
\end{algorithmic}
\end{algorithm}

\begin{algorithm}[h!]
\caption{Two-step convex-hull relaxation}
\label{alg:two_pass_convex_hull}
\begin{algorithmic}[1]
\State $\bm{w}^{*}
\leftarrow \text{Solve~Problem~\ref{prob:robust_counterpart_multistep_CH}}$
\For{$\ell=1,\dots,n_{\texttt{P}}$}
    \State $k_\ell^*\coloneqq
{\operatorname{argmin}}_{k\in\mathcal{I}^{\texttt{v}_{\ell}}}\;
\boldsymbol{\xi}^{\top}(\bm{w}^{*})\hat{\bm{v}}_{\ell}^{(k)}$
    \State $\bm{v}_\ell^*\leftarrow\hat{\bm{v}}_\ell^{(k_\ell^*)}$
\EndFor
\State $\bm{V}^*\leftarrow
\left[\begin{array}{ccc}
\bm{v}_{1}^{*} &
\dots &
\bm{v}_{n_{\texttt{P}}}^{*}
\end{array}\right]$
\State $\bm{w}^*\leftarrow
\text{Solve~Problem~\ref{prob:robust_counterpart_multistep_robust_trace}}$
\end{algorithmic}
\end{algorithm}

\section{Numerical Example}
\label{sec:numerical_example}

We test the proposed scheme on a planar transfer rendezvous with Clohessy-Wiltshire dynamics, with initial position dispersion and uncertain feedback control due to state-dependent navigation errors. We report the problem construction in Appendix \ref{sec:appendix}, and we summarize its formulation in Problem \ref{prob:robust_cw_feedback}.
{\renewcommand{\theproblem}{3}
\begin{problem}[Robust Rendezvous]
\label{prob:robust_cw_feedback}
\begin{equation}
\adjustbox{max width=\linewidth}{$\displaystyle
\begin{array}{cl}
\underset{\tilde{x}_0,\bm{u},\beta}{\min.}&\mathcal{L}(\bm{u})\\[1ex]
\text{s.t.}&\left|\begin{array}{l}
\tilde{x}_0=\begin{bmatrix}x^\top_0, x^\top_0\end{bmatrix}^\top,\quad u_k\in\mathcal{U}_k,\; k\in\mathcal{I}^{2\cup0},\quad\beta=\bar{\beta}\\[1ex]
\tilde{x}_{k+1}=A_k\tilde{x}_k+B_{u,k} u_k+B_{p,k}\bm{p}_k,
\quad k\in\mathcal{I}^{2\cup0},\\[1ex]
\bm{q}_k=C_k\tilde{x}_k+D_{u,k}u_k,
\quad k\in\mathcal{I}^{2\cup0},\\[1ex]
\mathcal{P}(x_0,\bm{u})\coloneqq\left\{\bm{p}\;\middle|\;
p_k^j\in\phi(q_k^j),\quad k\in\mathcal{I}^{2\cup0}, j\in\mathcal{I}^{2}
\right\},\\[1.5ex]
\alpha r_3\le\beta
\qquad\forall\,\bm{p}\in\mathcal{P}(x_0,\bm{u}).
\end{array}\right.
\end{array}
$}
\label{eq:robust_cw_feedback}
\end{equation}
\end{problem}}
The problem can be treated as discussed in the paper after removing one empty uncertainty domain for each constraint $p_0^j\in\phi(q_0^j)$, as discussed in the appendix. We solve Problem \ref{prob:robust_cw_feedback} using the proposed custom algorithm and the second-order conic solver QOCO \cite{Chari2026-lv}. Furthermore, we solve Problem \ref{prob:robust_cw_feedback} by solving directly the formulation in Problem \ref{prob:innerproblem_multistep_integer} with the commercial solver Gurobi \cite{gurobi}.
Fig. \ref{fig:two_step_multistage_box} illustrates the working principle of the two-step algorithm: in blue the integrated trajectories, in green the convex hulls of all uncertainty realizations. The green box denotes the final keep-in zone. In the first step, in Fig. \ref{fig:step_one_CHW}, the solution is superoptimal; however, the solution provides a sensible guess for the optimal direction to find optimal vertices at. The second step, in Fig. \ref{fig:step_two_CHW}, restores robust feasibility for all executed trajectories.
\begin{figure}[t!]
\centering
\subfloat[Step 1, Problem~\ref{prob:robust_counterpart_multistep_CH}. \label{fig:step_one_CHW}]{
\includegraphics[width=0.98\linewidth]{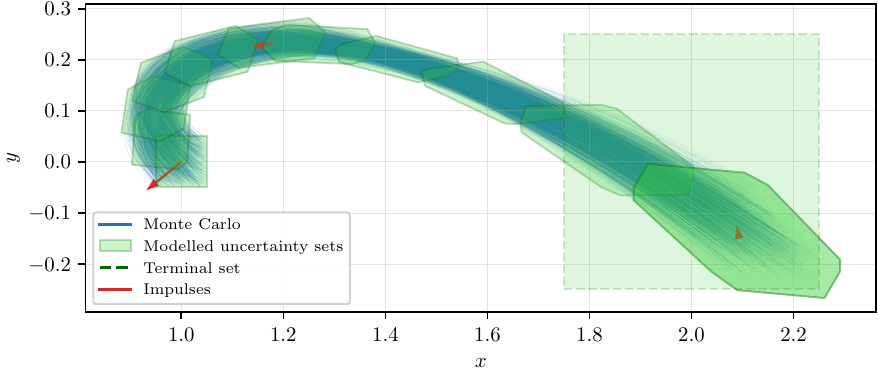}}\\
\subfloat[Step 2, Problem~\ref{prob:robust_counterpart_multistep_robust_trace}. \label{fig:step_two_CHW}]{
\includegraphics[width=0.98\linewidth]{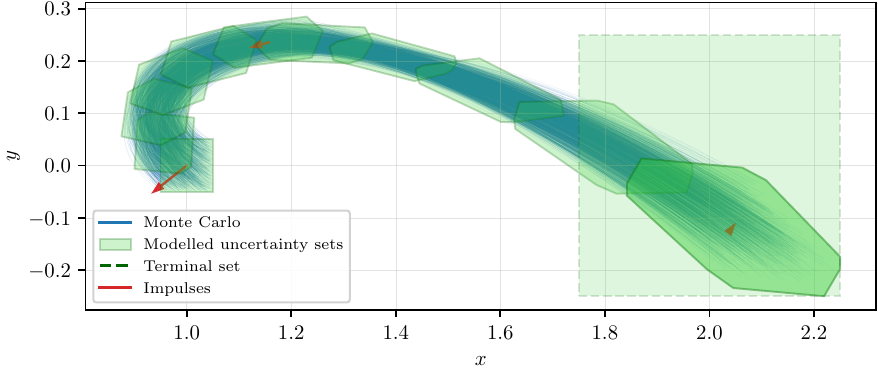}}
\caption{Comparison of the two-step solutions.}
\label{fig:two_step_multistage_box}
\end{figure}
In Tab. \ref{tab:solver_computational_times} we report the mean runtimes and standard deviations for Gurobi and QOCO over 30 different runs; these are obtained by uniformly rotating the final keep-in box with respect to its center by an angle between 0$^\circ$ and 90$^\circ$. For reference, the final keep-in box in Fig. \ref{fig:two_step_multistage_box} has an angle of 0$^\circ$. The Gurobi timings correspond to directly solving the mixed-integer max--min reformulation, Problem~\ref{prob:innerproblem_multistep_integer}, in place of Algorithm~\ref{alg:two_pass_convex_hull}; the QOCO timings instead accumulate the two convex solves within Algorithm~\ref{alg:two_pass_convex_hull} in QOCO first row and the remaining vertex-selection steps in QOCO second row. Algorithm 2 interface is implemented in Python, while QOCO is invoked directly via its API. For the scenario in Fig. \ref{fig:two_step_multistage_box}, the cost at convergence is $\mathcal{L}(\bm{u}) = 0.8741$, for both the proposed algorithm and Gurobi. The objectives of the proposed algorithm and Gurobi are the same, up to solver tolerance, for all 30 runs.
\begin{table}[h!]
\centering
\small
\begin{tabular}{llrr}
\toprule
\multicolumn{2}{c}{Solver} & Mean [ms] & Std. dev. [ms]\\
\midrule
\multicolumn{2}{c}{Gurobi} & 149.85 & 83.07\\
\cmidrule(lr){1-4}
\multirow{3}{*}{QOCO} & Solve & 7.42 & 0.44\\
 & Vertex selection & 3.46 & 0.11\\
 & Total & 10.88 & 0.44\\
\bottomrule
\end{tabular}
\caption{Computational times statistics over the run of 30 examples.}
\label{tab:solver_computational_times}
\end{table}

\section{Conclusion}
\label{sec:conclusions}
In this paper, we have tackled robust trajectory planning problems with uncertainties satisfying quadratic inequalities with rank-2 indefinite multipliers. We have shown how to cast a problem with uncertainties dependent on uncertain states as a bilevel program and then, by means of Lagrangian duality, we have derived finite nonsmooth constraints of $\textrm{max--min}$-type that guarantee robust satisfaction of the state constraints. We have proposed a heuristic, based on a convex relaxation followed by a convex feasibility-recovery step, for the nonsmooth problem, and tested it on a robust rendezvous problem with navigation uncertainty. We have reduced by a factor 13.8 the computation time with respect to the commercial solver Gurobi, while maintaining robust satisfaction of constraints.

\bibliographystyle{IEEEtran}
\bibliography{references}

@BOOK{Ben-Tal2009-nv,
  Title     = "{Robust Optimization}",
  author    = "Ben-Tal, Aharon and El Ghaoui, Laurent and Nemirovski, Arkadi",
  publisher = "Princeton University Press",
  address   = "Princeton, NJ",
  series    = "Princeton Series in Applied Mathematics",
  year      =  2009,
  language  = "en"
}

@ARTICLE{Chari2026-lv,
  Title     = "{QOCO: a Quadratic Objective Conic Optimizer with Custom Solver
               Generation}",
  author    = "Chari, Govind M and A\c{c}\i{}kme\c{s}e, Beh\c{c}et",
  journal   = "Mathematical Programming Computation",
  publisher = "Springer",
  month     =  "28~" # mar,
  year      =  2026,
  doi       = "10.1007/s12532-026-00311-8",
  language  = "en"
}

@ARTICLE{Kim2025-rb,
  title        = "{Continuous-Time Constrained Funnel Synthesis for
                  Incrementally Quadratic Nonlinear Systems}",
  author       = "Kim, Taewan and Luo, Dayou and A\c{c}\i{}kme\c{s}e, Beh\c{c}et",
  journal      = "arXiv [math.OC]",
  month        =  "12~" # nov,
  year         =  2025,
  primaryClass = "math.OC",
  doi          = "10.48550/arXiv.2511.08868"
}

@ARTICLE{Megretski1997-gi,
  Title     = "{System Analysis via Integral Quadratic Constraints}",
  author    = "Megretski, A and Rantzer, A",
  journal   = "IEEE Transactions on Automatic Control",
  publisher = "Institute of Electrical and Electronics Engineers (IEEE)",
  volume    =  42,
  number    =  6,
  pages     = "819--830",
  month     =  jun,
  year      =  1997,
  doi       = "10.1109/9.587335"
}

@ARTICLE{Luo2026-nf,
  Title     = "{Revisiting Lossless Convexification: Theoretical Guarantees for
               Discrete-Time Optimal Control Problems}",
  author    = "Luo, Dayou and Echigo, Kazuya and A\c{c}\i{}kme\c{s}e, Beh\c{c}et",
  journal   = "Automatica",
  publisher = "Elsevier BV",
  volume    =  183,
  month     =  jan,
  year      =  2026,
  doi       = "10.1016/j.automatica.2025.112537",
  language  = "en"
}

@ARTICLE{Acikmese2007-cp,
  Title     = "{Convex Programming Approach to Powered Descent Guidance for
               Mars Landing}",
  author    = "A\c{c}\i{}kme\c{s}e, Beh\c{c}et and Ploen, Scott R",
  journal   = "Journal of Guidance, Control, and Dynamics",
  publisher = "American Institute of Aeronautics and Astronautics (AIAA)",
  volume    =  30,
  number    =  5,
  pages     = "1353--1366",
  month     =  sep,
  year      =  2007,
  doi       = "10.2514/1.27553",
  language  = "en"
}

@ARTICLE{Echigo2023-lp,
  Title     = "{Linear Programming Approach to Relative-Orbit Control With
               Element-Wise Quantized Control}",
  author    = "Echigo, Kazuya and Hayner, Christopher R and Mittal, Avi and
               Sarsilmaz, Selahattin Burak and Harris, Matthew W and
               A\c{c}\i{}kme\c{s}e, Beh\c{c}et",
  journal   = "IEEE Control Systems Letters",
  publisher = "Institute of Electrical and Electronics Engineers (IEEE)",
  volume    =  7,
  pages     = "3042--3047",
  year      =  2023,
  doi       = "10.1109/LCSYS.2023.3289472",
  language  = "en"
}

@ARTICLE{Luo2025-dt,
  Title     = "{Discrete-Time Lossless Convexification for Pointing
               Constraints}",
  author    = "Luo, Dayou and Spada, Fabio and A\c{c}\i{}kme\c{s}e, Beh\c{c}et",
  journal   = "IEEE Control Systems Letters",
  publisher = "Institute of Electrical and Electronics Engineers (IEEE)",
  volume    =  9,
  pages     = "246--251",
  year      =  2025,
  doi       = "10.1109/LCSYS.2025.3569751",
  language  = "en"
}

@INPROCEEDINGS{Sheridan2023-cr,
  Title     = "{Convexification of Robust Trajectory Planning Problems With
               Nominal State and Control Dependent Uncertainties}",
  author    = "Sheridan, Oliver and A\c{c}\i{}kme\c{s}e, Beh\c{c}et",
  booktitle = "2023 62nd IEEE Conference on Decision and Control (CDC)",
  publisher = "IEEE",
  pages     = "6267--6272",
  year      =  2023,
  doi       = "10.1109/CDC49753.2023.10383510",
  language  = "en"
}

@ARTICLE{Sheridan2025-rf,
  Title     = "{Robust Fuel Optimal Trajectory Planning and Feedback Control
               for Constrained Linear Systems Under State- and Control-Dependent
               Perturbations}",
  author    = "Sheridan, Oliver and A\c{c}\i{}kme\c{s}e, Beh\c{c}et",
  journal   = "IEEE Control Systems Letters",
  publisher = "Institute of Electrical and Electronics Engineers (IEEE)",
  volume    =  9,
  pages     = "68--73",
  year      =  2025,
  doi       = "10.1109/LCSYS.2025.3558533",
  language  = "en"
}

@ARTICLE{Peng2024-jy,
  Title        = "{A Convexification-Based Outer-Approximation Method for Convex
                  and Nonconvex {MINLP}}",
  author       = "Peng, Zedong and Cao, Kaiyu and Furman, Kevin C and Li, Can
                  and Grossmann, Ignacio E and Bernal Neira, David E",
  journal      = "arXiv [math.OC]",
  month        =  "30~" # jul,
  year         =  2024,
  primaryClass = "math.OC",
  doi          = "10.48550/arXiv.2407.20973"
}

@ARTICLE{Gusev2025-yl,
  Title        = "{Exact Hull Reformulation for Quadratically Constrained
                  Generalized Disjunctive Programs}",
  author       = "Gusev, Sergey and Bernal Neira, David E",
  journal      = "arXiv [math.OC]",
  month        =  "22~" # aug,
  year         =  2025,
  primaryClass = "math.OC",
  doi          = "10.48550/arXiv.2508.16093",
  language     = "en"
}

@ARTICLE{Trespalacios2014-uv,
  Title     = "{Review of Mixed-Integer Nonlinear and Generalized Disjunctive
               Programming Methods}",
  author    = "Trespalacios, Francisco and Grossmann, Ignacio E",
  journal   = "Chemie Ingenieur Technik",
  publisher = "Wiley",
  volume    =  86,
  number    =  7,
  pages     = "991--1012",
  month     =  jul,
  year      =  2014,
  doi       = "10.1002/cite.201400037",
  language  = "en"
}

@ARTICLE{Balas1979-du,
  Title    = "{Disjunctive Programming}",
  author   = "Balas, Egon",
  journal  = "Annals of Discrete Mathematics",
  volume   =  5,
  pages    = "3--51",
  year     =  1979,
  language = "en"
}

@ARTICLE{Seiler2015-pj,
  Title     = "{Stability Analysis with Dissipation Inequalities and Integral
               Quadratic Constraints}",
  author    = "Seiler, Peter",
  journal   = "IEEE Transactions on Automatic Control",
  publisher = "Institute of Electrical and Electronics Engineers (IEEE)",
  volume    =  60,
  number    =  6,
  pages     = "1704--1709",
  month     =  jun,
  year      =  2015,
  doi       = "10.1109/tac.2014.2361004"
}

@INPROCEEDINGS{Schwenkel2025-pa,
  Title     = "{Output-Feedback Model Predictive Control under Dynamic
               Uncertainties Using Integral Quadratic Constraints}",
  author    = "Schwenkel, Lukas and K{\"{o}}hler, Johannes and M{\"{u}}ller,
               Matthias A and Allg{\"{o}}wer, Frank",
  booktitle = "{2025 IEEE 64th Conference on Decision and Control (CDC)}",
  publisher = "IEEE",
  pages     = "3514--3521",
  month     =  "9~" # dec,
  year      =  2025,
  doi       = "10.1109/cdc57313.2025.11312847"
}

@ARTICLE{Shima2026-jf,
  Title        = "{Regularized Model Predictive Control via Contractivity and
                  Implicit Lur'E Analysis}",
  author       = "Shima, Ryotaro and Gokhale, Anand and Davydov, Alexander and
                  Bullo, Francesco",
  journal      = "arXiv [math.OC]",
  month        =  "1~" # jul,
  year         =  2026,
  primaryClass = "math.OC",
  doi          = "10.48550/arXiv.2607.00383"
}

@INPROCEEDINGS{Biertumpfel2025-fj,
  Title     = "{Control Synthesis Along Uncertain Trajectories Using Integral
               Quadratic Constraints}",
  author    = "Biert{\"{u}}mpfel, Felix and Seiler, Peter and Pfifer, Harald",
  booktitle = "{2025 American Control Conference (ACC)}",
  publisher = "IEEE",
  pages     = "4357--4362",
  month     =  "8~" # jul,
  year      =  2025,
  doi       = "10.23919/acc63710.2025.11107524"
}

@ARTICLE{Fetzer2018-gq,
  Title     = "{Invariance with Dynamic Multipliers}",
  author    = "Fetzer, Matthias and Scherer, Carsten W and Veenman, Joost",
  journal   = "IEEE Transactions on Automatic Control",
  publisher = "Institute of Electrical and Electronics Engineers (IEEE)",
  volume    =  63,
  number    =  7,
  pages     = "1929--1942",
  month     =  jul,
  year      =  2018,
  doi       = "10.1109/tac.2017.2762764"
}

@ARTICLE{Yin2021-zt,
  Title     = "{Backward Reachability Using Integral Quadratic Constraints for
               Uncertain Nonlinear Systems}",
  author    = "Yin, He and Seiler, Peter and Arcak, Murat",
  journal   = "IEEE Control Systems Letters",
  publisher = "Institute of Electrical and Electronics Engineers (IEEE)",
  volume    =  5,
  number    =  2,
  pages     = "707--712",
  month     =  apr,
  year      =  2021,
  doi       = "10.1109/lcsys.2020.3005315"
}

@ARTICLE{Schwenkel2026-oq,
  Title     = "{Multi-Objective Robust Controller Synthesis with Integral
               Quadratic Constraints in Discrete-Time}",
  author    = "Schwenkel, Lukas and K{\"{o}}hler, Johannes and M{\"{u}}ller,
               Matthias A and Scherer, Carsten W and Allg{\"{o}}wer, Frank",
  journal   = "International Journal of Robust and Nonlinear Control",
  publisher = "Wiley",
  volume    =  36,
  number    =  3,
  pages     = "935--954",
  month     =  feb,
  year      =  2026,
  doi       = "10.1002/rnc.70098",
  language  = "en"
}

@ARTICLE{Fazlyab2022-xj,
  Title     = "{Safety Verification and Robustness Analysis of Neural Networks
               via Quadratic Constraints and Semidefinite Programming}",
  author    = "Fazlyab, Mahyar and Morari, Manfred and Pappas, George J",
  journal   = "IEEE Transactions on Automatic Control",
  publisher = "Institute of Electrical and Electronics Engineers (IEEE)",
  volume    =  67,
  number    =  1,
  pages     = "1--15",
  month     =  jan,
  year      =  2022,
  doi       = "10.1109/tac.2020.3046193"
}

@ARTICLE{Yin2022-og,
  Title     = "{Stability Analysis Using Quadratic Constraints for Systems with
               Neural Network Controllers}",
  author    = "Yin, He and Seiler, Peter and Arcak, Murat",
  journal   = "IEEE Transactions on Automatic Control",
  publisher = "Institute of Electrical and Electronics Engineers (IEEE)",
  volume    =  67,
  number    =  4,
  pages     = "1980--1987",
  month     =  apr,
  year      =  2022,
  doi       = "10.1109/tac.2021.3069388"
}

@INPROCEEDINGS{Junnarkar2025-tq,
  Title     = "{Stability Margins of Neural Network Controllers}",
  author    = "Junnarkar, Neelay and Arcak, Murat and Seiler, Peter",
  booktitle = "{2025 American Control Conference (ACC)}",
  publisher = "IEEE",
  pages     = "1355--1360",
  month     =  "8~" # jul,
  year      =  2025,
  doi       = "10.23919/acc63710.2025.11107746"
}

@ARTICLE{Pauli2026-jm,
  Title     = "{LipKernel: Lipschitz-Bounded Convolutional Neural Networks via
               Dissipative Layers}",
  author    = "Pauli, Patricia and Wang, Ruigang and Manchester, Ian R and
               Allg{\"{o}}wer, Frank",
  journal   = "Automatica",
  publisher = "Elsevier BV",
  volume    =  188,
  number    =  112959,
  pages     =  112959,
  month     =  jun,
  year      =  2026,
  doi       = "10.1016/j.automatica.2026.112959",
  language  = "en"
}

@article{Balas1998-ti,
  author  = {Balas, Egon},
  Title   = {Disjunctive Programming: Properties of the Convex Hull of Feasible Points},
  journal = {Discrete Applied Mathematics},
  volume  = {89},
  number  = {1--3},
  pages   = {3--44},
  year    = {1998},
  doi     = {10.1016/S0166-218X(98)00136-X}
}

@misc{gurobi,
  author = {{Gurobi Optimization, LLC}},
  Title = {{Gurobi Optimizer Reference Manual}},
  year = 2026,
  url = "https://www.gurobi.com"
}

@PHDTHESIS{Acikmese2002-cr,
  Title    = "{Stabilization, Observation, Tracking and Disturbance Rejection
              for Uncertain/Nonlinear and Time-Varying Systems}",
  author   = "A\c{c}\i{}kme\c{s}e, Ahmet Beh\c{c}et",
  address  = "West Lafayette, IN",
  year     =  2002,
  school   = "Purdue University",
  language = "en"
}

@ARTICLE{Xu2021-bv,
  Title     = "{Observer-Based Controllers for Incrementally Quadratic Nonlinear
               Systems with Disturbances}",
  author    = "Xu, Xiangru and A\c{c}\i{}kme\c{s}e, Beh\c{c}et and Corless, Martin",
  journal   = "IEEE Transactions on Automatic Control",
  publisher = "Institute of Electrical and Electronics Engineers (IEEE)",
  volume    =  66,
  number    =  3,
  pages     = "1129--1143",
  month     =  mar,
  year      =  2021,
  doi       = "10.1109/tac.2020.2996985"
}

@INPROCEEDINGS{Reynolds2021-vl,
  Title     = "{Funnel Synthesis for the 6-DOF Powered Descent Guidance Problem}",
  author    = "Reynolds, Taylor and Malyuta, Danylo and Mesbahi, Mehran and
               Acikmese, Behcet and Carson, John M",
  booktitle = "{AIAA Scitech 2021 Forum}",
  publisher = "American Institute of Aeronautics and Astronautics",
  address   = "Reston, Virginia",
  month     =  "11~" # jan,
  year      =  2021,
  doi       = "10.2514/6.2021-0504"
}

@ARTICLE{Chen2024-ox,
  title     = "{Robust Model Predictive Control with Polytopic Model Uncertainty
               through System Level Synthesis}",
  author    = "Chen, Shaoru and Preciado, Victor M and Morari, Manfred and
               Matni, Nikolai",
  journal   = "Automatica",
  publisher = "Elsevier BV",
  volume    =  162,
  number    =  111431,
  pages     =  111431,
  month     =  apr,
  year      =  2024,
  doi       = "10.1016/j.automatica.2023.111431",
  language  = "en"
}

@INPROCEEDINGS{Rakovic2016-yt,
  title     = "{Elastic Tube Model Predictive Control}",
  author    = "Rakovic, Sasa V and Levine, William S and Acikmese, Behcet",
  booktitle = "{2016 American Control Conference (ACC)}",
  publisher = "IEEE",
  pages     = "3594--3599",
  month     =  jul,
  year      =  2016,
  doi       = "10.1109/acc.2016.7525471"
}

@ARTICLE{Bujarbaruah2022-ux,
  title     = "{Robust MPC for LPV Systems via a Novel Optimization-based
               Constraint Tightening}",
  author    = "Bujarbaruah, Monimoy and Rosolia, Ugo and St{\"{u}}rz, Yvonne R
               and Zhang, Xiaojing and Borrelli, Francesco",
  journal   = "Automatica",
  publisher = "Elsevier BV",
  volume    =  143,
  number    =  110459,
  pages     =  110459,
  month     =  sep,
  year      =  2022,
  doi       = "10.1016/j.automatica.2022.110459",
  language  = "en"
}

@ARTICLE{Lew2025-em,
  title     = "{Convex Hulls of Reachable Sets}",
  author    = "Lew, Thomas and Bonalli, Riccardo and Pavone, Marco",
  journal   = "IEEE Transactions on Automatic Control",
  publisher = "IEEE",
  volume    =  70,
  number    =  12,
  pages     = "8195--8209",
  month     =  dec,
  year      =  2025,
  doi       = "10.1109/tac.2025.3586777"
}

@INPROCEEDINGS{Malyuta2019-yo,
  title     = "{Robust Model Predictive Control for Linear Systems with State-
               and Input-Dependent Uncertainties}",
  author    = "Malyuta, Danylo and Acikmese, Behcet and Cacan, Martin",
  booktitle = "{2019 American Control Conference (ACC)}",
  publisher = "IEEE",
  month     =  jul,
  year      =  2019,
  doi       = "10.23919/acc.2019.8815343"
}

@ARTICLE{Lappas2018-xy,
  title     = "{Robust Optimization for Decision-Making under Endogenous
               Uncertainty}",
  author    = "Lappas, Nikolaos H and Gounaris, Chrysanthos E",
  journal   = "Computers \& Chemical Engineering",
  publisher = "Elsevier BV",
  volume    =  111,
  pages     = "252--266",
  month     =  mar,
  year      =  2018,
  doi       = "10.1016/j.compchemeng.2018.01.006",
  language  = "en"
}

@ARTICLE{Nohadani2018-zf,
  title     = "{Optimization under Decision-Dependent Uncertainty}",
  author    = "Nohadani, Omid and Sharma, Kartikey",
  journal   = "SIAM Journal on Optimization",
  publisher = "Society for Industrial \& Applied Mathematics (SIAM)",
  volume    =  28,
  number    =  2,
  pages     = "1773--1795",
  month     =  jan,
  year      =  2018,
  doi       = "10.1137/17m1110560",
  language  = "en"
}

@ARTICLE{Soloperto2018-gg,
  title     = "{Learning-based robust model predictive control with
               state-dependent uncertainty}",
  author    = "Soloperto, Raffaele and M{\"{u}}ller, Matthias A and Trimpe,
               Sebastian and Allg{\"{o}}wer, Frank",
  journal   = "IFAC-PapersOnLine",
  publisher = "Elsevier BV",
  volume    =  51,
  number    =  20,
  pages     = "442--447",
  year      =  2018,
  doi       = "10.1016/j.ifacol.2018.11.052",
  language  = "en"
}

@BOOK{Bertsimas1997-lo,
  title     = "{Introduction to Linear Optimization}",
  author    = "Bertsimas, Dimitris and Tsitsiklis, John N.",
  publisher = "Athena Scientific",
  series    = "Athena Scientific Series in Optimization and Neural Computation",
  volume    =  6,
  year      =  1997,
  isbn      = "978-1-886529-19-9"
}

@ARTICLE{Chen2023-ge,
  title     = "{Robust Optimization with Continuous Decision-Dependent
               Uncertainty with Applications to Demand Response Management}",
  author    = "Chen, Hongfan and Sun, Xu Andy and Yang, Haoxiang",
  journal   = "SIAM Journal on Optimization",
  publisher = "Society for Industrial \& Applied Mathematics (SIAM)",
  volume    =  33,
  number    =  3,
  pages     = "2406--2434",
  year      =  2023,
  doi       = "10.1137/22m1502082",
  language  = "en"
}

@TECHREPORT{Blackford1968-orbital,
  title       = "{Guidance, Flight Mechanics and Trajectory Optimization. Volume XI---Guidance Equations for Orbital Operations}",
  author      = "Blackford, A. L. and Grier, D. R. and Townsend, G. E.",
  institution = "National Aeronautics and Space Administration",
  number      = "NASA-CR-1010",
  year        = 1968,
  month       = feb,
  url         = "https://ntrs.nasa.gov/citations/19680007746"
}

@ARTICLE{Kohler2021-rc,
  title     = "{A Computationally Efficient Robust Model Predictive Control
               Framework for Uncertain Nonlinear Systems}",
  author    = "Kohler, Johannes and Soloperto, Raffaele and Muller, Matthias A
               and Allgower, Frank",
  journal   = "IEEE Transactions on Automatic Control",
  publisher = "IEEE",
  volume    =  66,
  number    =  2,
  pages     = "794--801",
  month     =  feb,
  year      =  2021,
  doi       = "10.1109/tac.2020.2982585"
}

\section{Appendix}
\label{sec:appendix}
\subsection{Stacked matrices definitions}
The \textit{stacked matrices} $\bm{A}$, $\bm{B}_u$ and $\bm{B}_{p}$ are defined as follows.
$$
\bm{A} \coloneqq \left[\begin{array}{c}
    A_0 \\
    \vdots \\
    \prod_{j=0}^{N} A_j
\end{array}\right] \coloneqq \left[\begin{array}{c}
    \bm{A}_1 \\
    \vdots \\
    \bm{A}_{N+1}
\end{array}\right]$$
$$ \bm{B}_{u} \coloneqq \left[\begin{array}{ccc}
    B_{u,0} & \cdots & 0 \\
    \vdots & \ddots & \vdots \\
    \prod_{j=1}^{N} A_j B_{u,0} & \cdots & B_{u,N}
\end{array}\right] \coloneqq  \left[\begin{array}{c}
    \bm{B}_{u,1} \\
    \vdots \\
    \bm{B}_{u,N+1}
\end{array}\right]
$$
$$\bm{B}_{p} \coloneqq \left[\begin{array}{ccc}
    B_{p,0} & \cdots & 0 \\
    \vdots & \ddots & \vdots \\
    \prod_{j=1}^{N} A_j B_{p,0} & \cdots & B_{p,N} 
\end{array}\right]\coloneqq  \left[\begin{array}{c}
    \bm{B}_{p,1} \\
    \vdots \\
    \bm{B}_{p,N+1}
\end{array}\right]
$$
The \textit{stacked matrices} $\bm{C}$, $\bm{D}_u$ and $\bm{D}_p$ are defined as follows.
$$
\tilde{\bm{C}} \coloneqq \left[\begin{array}{ccccc} C_0 & 0 & \dots & 0  \\
    0 & \ddots & \ddots & \vdots \\ 
    \vdots & \ddots & \ddots & 0 \\ 
    0 & \dots & 0 & C_N 
\end{array}\right]; \quad 
\bm{C} \coloneqq \tilde{\bm{C}} \bm{A}^o \coloneqq  \left[\begin{array}{c}
    \bm{C}_{0} \\
    \bm{C}_{1} \\
    \vdots \\
    \bm{C}_{N}
\end{array}\right]
$$
$$
\bm{D}_{u} \coloneqq \tilde{\bm{C}} \bm{B}_u^o + \left[\begin{array}{cccc} D_{u,0} & 0 & \dots & 0 \\
    0 & \ddots & \ddots & \vdots \\ 
    \vdots & \ddots & \ddots & 0\\
    0 & \dots & 0 & D_{u,N}\end{array}\right] \coloneqq \left[\begin{array}{c}
    \bm{D}_{u,0} \\
    \bm{D}_{u,1} \\
    \vdots \\
    \bm{D}_{u,N}
\end{array}\right]
$$
$$\bm{D}_{p} \coloneqq \tilde{\bm{C}} \bm{B}_p^o + \left[\begin{array}{cccc} D_{p,0} & 0 & \dots & 0 \\
    0 & \ddots & \ddots & \vdots \\ 
    \vdots & \ddots & \ddots & 0 \\
    0 & \dots & 0 & D_{p,N}\end{array}\right]\coloneqq\left[\begin{array}{c}
    \bm{D}_{p,0} \\
    \bm{D}_{p,1} \\
    \vdots \\
    \bm{D}_{p,N}
\end{array}\right]
$$
where the \textit{stacked matrices with offset} $\bm{A}^{o}, \bm{B}_{u}^{o}, \bm{B}_{p}^{o}$ account for the time offset between $\bm{x}$ and $\bm{q}$ and are defined in the next equation.
$$
\bm{A}^{o} \coloneqq \left[\begin{array}{c}
    I\\
    \bm{A}_1 \\
    \vdots \\
    \bm{A}_N
\end{array}\right], \;
\bm{B}_{u}^{o} \coloneqq \left[\begin{array}{c}
    0\\
    \bm{B}_{u,1} \\
    \vdots \\
    \bm{B}_{u,N}
\end{array}\right], \;
\bm{B}_{p}^{o} \coloneqq \left[\begin{array}{c}
    0\\
    \bm{B}_{p,1} \\
    \vdots \\
    \bm{B}_{p,N}
\end{array}\right].
$$

\subsection{Numerical example description}
The nondimensional state is defined as $x=(r^\top,v^\top)^\top$, where $r$, $v$ are nondimensional position and velocity. Initial nominal state is given by $x_0$, and each component of the initial position dispersion satisfies a QI; both are provided as follows.
$$
x_0=\begin{bmatrix}1&0&0&1\end{bmatrix}^\top,
\qquad
|p_0^j|\le \bar{p}_{\max},\quad j\in\mathcal{I}^{2},
$$
while initial velocity is perfectly known. Uncertainty $p_0^j$ does not depend on state while it depends on the assigned constant $\bar{p}_{\max}$. The vector $\bm{p}_0=[p_0^1,p_0^2]^\top$ collects the two scalar initial position errors.
An initial and a midcourse impulse are used to control the system and to drive all dispersed trajectories to satisfy the final keep-in zone constraint.
$$
\alpha r_3\le\beta,\qquad \beta=\bar{\beta},
$$
where
$$
\alpha=\begin{bmatrix}I_2\\-I_2\end{bmatrix},\qquad
\bar{\beta}=\begin{bmatrix}r_f+h\bm{1}_2\\-r_f+h\bm{1}_2\end{bmatrix},
$$
$$
r_f=\begin{bmatrix}2\\0\end{bmatrix},\qquad h=0.25.
$$

The reference state $x^{\mathrm{ref}}_k$ augments the state $x_k$ to $\tilde{x}_k\coloneqq[(x_k)^\top,(x^{\mathrm{ref}}_k)^\top]^\top$. $x^{\mathrm{ref}}_k$ is obtained from the nominal feedforward control $\Delta v_k$, without any uncertainty. Each impulse $\Delta\tilde{v}_k$, $k\in\mathcal{I}^{2}$, in addition to the nominal feedforward component $\Delta v_k$ has a nominal feedback component $K_k(r_k-r^{\mathrm{ref}}_k)$ and uncertain feedback component $K_k\bm{p}_k$.
$$
\Delta\tilde{v}_k=\Delta v_k+K_k(r_k-r_k^{\mathrm{ref}})+K_k\bm{p}_k,
\qquad k\in\mathcal{I}^{2}.
$$

The nominal feedforward impulses satisfy $\|\Delta v_k\|_\infty\le \Delta v_{\max}$, $k\in\mathcal{I}^{2}$, where $\Delta v_{\max}=1$. This bound represents the finite velocity increment allocated for nominal maneuvers along each axis; the feedback corrections are modeled separately in $\Delta\tilde{v}_k$, and are allowed by some margin left with respect to the nominal feedforward authority.

$K_k$ is an assigned position-feedback gain, that we retrieve from a Fixed-Time-of-Arrival controller \cite{Blackford1968-orbital}.

The navigation error $\bm{p}_k$ depends on distance from a target placed at $\bar{r}_T=\begin{bmatrix}2.5&0\end{bmatrix}^\top$; the displacement with respect to target is computed as $\bm{q}_k=r_k - \bar{r}_T$. Each component $p_k^j$ and $q_k^j$ obey the following set-based dependency
$$
p_k^j\in\phi(q_k^j)\coloneqq
\left\{p\in\mathbb{R}\;\middle|\;|p|\le\gamma|q_k^j|\right\},
\quad k\in\mathcal{I}^{2},\ j\in\mathcal{I}^{2},
$$
with constant $\gamma=0.05$. Uncertainty $p_k^j$ is therefore state-dependent, and depends on the uncertain state $r_k$ and on the assigned constant $\bar{r}_T$. There are six independent scalar channels, collected in
$$
\bm{p}\coloneqq[\bm{p}_0^\top,\bm{p}_1^\top,\bm{p}_2^\top]^\top.
$$
Each scalar channel has its own multiplier. The initial position and navigation error multipliers are, respectively,
$$
M_0=\begin{bmatrix}1&0\\0&-1\end{bmatrix},\qquad
M_k=\begin{bmatrix}\gamma^2&0\\0&-1\end{bmatrix}.
$$
\begin{remark}
    While the navigation error satisfies a state-dependent QI, the initial position error satisfies a QI with a known term; the technique we have proposed for state-dependent QIs can be applied as well to state-independent QIs; however, since the QI relates uncertainty with an assigned value, one of the two linear cones stemming from the QI is infeasible and can be pruned in advance.
\end{remark}
An initial and a midcourse impulse are available, for a total of $N_1=2$ arcs of normalized duration $\Delta t_k=0.5$; however, since the initial position is uncertain, we introduce a first fictitious arc $N_0=1$ of duration $\Delta t_0=0$. The initial and midcourse controls are $\Delta v_1,\Delta v_2$, respectively; a terminal impulse does not affect the final position and is zero at the optimum; $\Delta v_0=0$ on the first fictitious arc. There are $n_c=2$ scalar channels at each of stages $k=0,1,2$, for a total of $N_c=6$.

Let
$$
A=
\begin{bmatrix}
0&0&1&0\\
0&0&0&1\\
3&0&0&2\\
0&0&-2&0
\end{bmatrix},
\; A^x_k\coloneqq e^{A\Delta t_k},\;
\begin{array}{l}
\Delta t_0=0,\\
\Delta t_1=0.5,\\
\Delta t_2=0.5.
\end{array}
$$
We can describe the system dynamics after defining the impulse matrix, position matrix and initial uncertainty matrix
$$
B=\begin{bmatrix}0_{2\times2}\\I_2\end{bmatrix},\qquad
C^r=\begin{bmatrix}I_2&0_{2\times2}\end{bmatrix},\qquad
J=\begin{bmatrix}I_2\\0_{2\times2}\end{bmatrix}.
$$
Define control and uncertainty matrices
$$
B^x_{\Delta v,0}=0_{4\times2},\qquad B^x_{p,0}=J,\qquad K_0=0_{2\times2},
$$
$$
B^x_{\Delta v,k}=A^x_kB,\qquad B^x_{p,k}=B^x_{\Delta v,k}K_k,
\quad k\in\mathcal{I}^{2}.
$$
Therefore, dynamics read as follows
\begin{align*}
\tilde{x}_{k+1} &=
\begin{bmatrix}
    \begin{array}{l}
A^x_k x_k+B^x_{\Delta v,k} \Delta v_k+\\
{}\qquad+B^x_{\Delta v,k} K_k(r_k-r_k^{\mathrm{ref}})+B^x_{p,k}\bm{p}_k
    \end{array}\\[0.5ex]
A^x_k x_k^{\mathrm{ref}}+B^x_{\Delta v,k} \Delta v_k
\end{bmatrix}\\[1ex]
&=A_k\tilde{x}_k+B_{\Delta v,k} \Delta v_k+B_{p,k}\bm{p}_k,
\quad k\in\mathcal{I}^{2\cup0},
\end{align*}
where
\begin{align*}
A_k&\coloneqq\begin{bmatrix}
A^x_k+B^x_{\Delta v,k}K_kC^r&-B^x_{\Delta v,k}K_kC^r\\
0&A^x_k
\end{bmatrix},\\
B_{\Delta v,k}&\coloneqq\begin{bmatrix}B^x_{\Delta v,k}\\B^x_{\Delta v,k}\end{bmatrix},
\qquad B_{p,k}\coloneqq\begin{bmatrix}B^x_{p,k}\\0_{4\times2}\end{bmatrix}.
\end{align*}
The initial state is
$$
\tilde{x}_0=\begin{bmatrix}x_0\\x_0\end{bmatrix}.
$$
The problem control $u_k$ embeds the impulse $\Delta v_k$, the target position $r_T$ and the initial position limit $p_{\text{max}}$ and is hence defined as 
$$
u_k \coloneqq \left[\Delta^\top v_k, r^\top_T, p_{\text{max}}\right]^\top,
$$
thus the control matrix reads
$$
B_{u,k}\coloneqq\begin{bmatrix}B_{\Delta v,k}&0_{8\times3}\end{bmatrix},
\qquad k\in\mathcal{I}^{2\cup0}.
$$
The admissible control sets $\mathcal{U}_k, \; k\in\mathcal{I}^{2\cup0}$ further read 
$$
\begin{array}{c}
    \mathcal{U}_0 \coloneqq \left\{ u \;\left| \begin{array}{l}
        \Delta v = 0 \\ r_T=\bar{r}_T \\ p_{\text{max}} = \bar{p}_{\text{max}}
\end{array}\right.\right\} \\[6ex]\mathcal{U}_k \coloneqq \left\{ u \;\left| \begin{array}{l}
        \|\Delta v\|_\infty \leq \Delta v_{\text{max}} \\ r_T=\bar{r}_T \\ p_{\text{max}} = \bar{p}_{\text{max}}
\end{array}\right.\right\},\quad k\in\mathcal{I}^2
\end{array}
$$
Furthermore, $\bm{q}_k$ is either fixed, as per the initial position uncertainty, or is affine in position, as per navigation error:
$$
\begin{array}{l}
    \bm{q}_0 = p_{\text{max}}\bm{1}_2 = D_{u,0}u_0\\[0.5ex]
    \bm{q}_k = r_k - r_{T} = C_k x_k + D_{u,k}u_k
\end{array}
$$
therefore
\begin{align*}
D_{u,0}&\coloneqq\begin{bmatrix}0_{2\times4}&\bm{1}_2\end{bmatrix},\\[0.5ex]
D_{u,k}&\coloneqq\begin{bmatrix}0_{2\times2}&-I_2&0_{2\times1}\end{bmatrix},
&&k\in\mathcal{I}^{2},
\end{align*}
while
$$
C_0\coloneqq0_{2\times8},\qquad
C_1=C_2\coloneqq\begin{bmatrix}C^r&0_{2\times4}\end{bmatrix}.
$$

We aim at minimizing the sum of squared control magnitudes, i.e.
$$
\mathcal{L}(\bm u)\coloneqq\sum_{k=1}^{2}\|\Delta v_k\|_2^2.
$$

\end{document}